\documentclass[
	english
]{siamart251216}
\usepackage[T1]{fontenc}
\usepackage[utf8]{inputenc}
\usepackage{amsfonts,amssymb}
\usepackage{booktabs}

\usepackage{preamble}

\usepackage{pifont}% http://ctan.org/pkg/pifont
\newcommand{\cmark}{\ding{51}}%
\newcommand{\xmark}{\ding{55}}%

\usepackage{enumitem}
\setenumerate{label=\textup{(\roman*)}}

\usepackage{lmodern}
\usepackage[final]{microtype}

\usepackage{tikz}

\usetikzlibrary{arrows}

\tikzset{
    punkt/.style={
           rectangle,
           draw=white, very thick,
           text width=4.5em,
           minimum height=1.5em,
           text centered}
}

\usepackage{mathtools}

\providecommand\given{\nonscript\;\delimsize|\nonscript\;\mathopen{}}
\DeclarePairedDelimiterX\set[1]\{\}{#1}

\DeclarePairedDelimiterX\dual[2]{\langle}{\rangle}{#1,#2}

\DeclarePairedDelimiter\abs{\lvert}{\rvert}
\DeclarePairedDelimiter\norm{\lVert}{\rVert}

\DeclarePairedDelimiter\parens()

\newcommand\N{\mathbb{N}}
\newcommand\R{\mathbb{R}}

\newcommand\LL{\mathcal{L}}

\renewcommand\d{\mathop{}\!\mathrm{d}}

\newcommand{\dualspace}{^\star}
\newcommand{\adjoint}{^\star}
\newcommand{\polar}{^\circ}
\newcommand{\anni}{^\perp}
\newcommand{\tto}{\rightrightarrows}

\newcommand{\cl}{\operatorname{cl}}
\newcommand{\cone}{\operatorname{cone}}
\newcommand{\interior}{\operatorname{int}}
\newcommand{\wsclosure}{\operatorname{cl}_\star}

\newcommand\borel{\mathcal{B}}

\newcommand\radialcone{\mathcal{R}}
\newcommand\tangentcone{\mathcal{T}}
\newcommand\normalcone{\mathcal{N}}

\newcommand{\zerofunction}{\mathbf 0}
\newcommand{\onefunction}{\mathbf 1}

\renewcommand\orcid[1]{%
	\hspace{.25em}%
	\href{http://orcid.org/#1}{%
		\protect\includegraphics[height=1em]{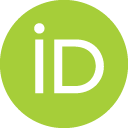}%
	}%
	\hspace{.25em}%
}

\newcommand\mynewsiamremark[3]{
	\newsiamremark{#1}{#2}
	\crefname{#1}{#2}{#3}
	\AddToHook{env/#1/begin}{\crefalias{theorem}{#1}}
}
\mynewsiamremark{example}{Example}{Examples}
\mynewsiamremark{remark}{Remark}{Remarks}

\patchcmd{\SetTagPlusEndMark}${}{}{}
\patchcmd{\SetTagPlusEndMark}${}{}{}

\makeatletter
\def\fullwidthdisplay{\displayindent\z@ \displaywidth\columnwidth}
\edef\@tempa{\noexpand\fullwidthdisplay\the\everydisplay}
\everydisplay\expandafter{\@tempa}
\makeatother

\title{%
	Uniqueness and stability of Lagrange multipliers
	and associated qualification conditions
	}%
\author{%
	Patrick Mehlitz\thanks{%
		Marburg University,
		Department of Mathematics and Computer Science,
		35032 Marburg,
		Germany,
		\email{mehlitz@uni-marburg.de},
		\orcid{0000-0002-9355-850X}%
	}% 
	\and 
	Gerd Wachsmuth\thanks{%
		BTU Cottbus-Senftenberg, 
		Institute of Mathematics, 
		03046 Cottbus, 
		Germany, 
		\email{gerd.wachsmuth@b-tu.de},
		\orcid{0000-0002-3098-1503}%
	}%
	}

	\headers{Patrick Mehlitz, Gerd Wachsmuth}{Uniqueness and stability of Lagrange multipliers}

\begin{document}
%%fakesection: Title
\maketitle

\begin{abstract}
	This paper is concerned with uniqueness and stability
	of Lagrange multipliers for constrained optimization problems in abstract spaces.
	It is well known that validity of the strict Robinson--Zowe--Kurcyusz condition
	implies the so-called isolated calmness,
	a one-sided Lipschitz property tailored for set-valued mappings, 
	of some Lagrange multiplier mapping
	associated with a perturbed version of the original optimization problem,
	and the latter indeed is enough to guarantee uniqueness of the Lagrange multiplier.
	The paper studies the isolated calmness of the Lagrange multiplier mapping in detail.
	Exemplary, it is shown that this condition is sufficient for the Robinson--Zowe--Kurcyusz constraint
	qualification and, in the presence of additional assumptions, 
	even equivalent to the strict
	Robinson--Zowe--Kurcyusz condition.
	Illustrative examples are presented to underline the necessity of postulated assumptions. 
\end{abstract}

\begin{keywords}
	Lagrange multipliers, 
	optimization in abstract spaces, 
	perturbation analysis, 
	qualification conditions
\end{keywords}

\begin{MSCcodes}
	\mscLink{49K40}, \mscLink{90C31}, \mscLink{90C48}
\end{MSCcodes}

\section{Introduction}

We are concerned with the constrained optimization problem
\begin{equation}\label{eq:problem}\tag{P}
	\min\limits_x\quad f(x)\quad\text{s.t.}\quad g(x)\in K
\end{equation}
for continuously Fr\'{e}chet differentiable mappings 
$f\colon X\to\R$ and $g\colon X\to Y$ between Banach spaces $X$ and $Y$
as well as a closed convex set $K\subset Y$.
These assumptions are standing throughout the paper.
Associated with \eqref{eq:problem} is the Lagrangian
$\LL\colon X\times Y\dualspace\to\R$ defined by
\[
	\forall x\in X,\,\forall \lambda\in Y\dualspace\colon\quad
	\LL(x,\lambda)
	\coloneqq 
	f(x) + \dual{\lambda}{g(x)}_Y.
\]
The Karush--Kuhn--Tucker (KKT for brevity) system 
of \eqref{eq:problem} is given by
\begin{equation}\label{eq:KKT}
	\LL'_x(x,\lambda)=0,\quad \lambda\in\normalcone_K(g(x)),
\end{equation}
where $\normalcone_K(g(x))$ is the normal cone to $K$ at $g(x)$.
We emphasize that $\lambda\in\normalcone_K(g(x))$
implicitly requires $g(x)\in K$, 
i.e., feasibility of $x$ for \eqref{eq:problem}.
A primal-dual solution $(x,\lambda)\in X\times Y\dualspace$ of \eqref{eq:KKT}
will be referred to as a KKT pair subsequently,
and $\lambda$ is called a Lagrange multiplier associated with $x$.
Given $x\in X$, the associated set of Lagrange multipliers is
\[
	\Lambda(x)
	\coloneqq
	\set{\lambda\in Y\dualspace\given\LL'_x(x,\lambda)=0,\,\lambda\in\normalcone_K(g(x))}.
\]

In this paper, 
we are concerned with uniqueness and stability of 
Lagrange multipliers associated with \eqref{eq:problem}.
For our study, we consider the perturbed optimization problem
\begin{equation*}\label{eq:pertubed_problem}\tag{P$(\xi,\upsilon)$}
	\min\limits_x\quad f(x)-\dual{\xi}{x}_X
		\quad\text{s.t.}\quad g(x)+\upsilon\in K
\end{equation*}
for parameters $\xi\in X\dualspace$ and $\upsilon\in Y$.
If these parameters vanish, \eqref{eq:problem} is recovered.
The KKT system of \eqref{eq:pertubed_problem} reads as
\[
	\LL'_x(x,\lambda)=\xi,\quad \lambda\in\normalcone_K(g(x)+\upsilon).
\]
Let us fix a KKT pair $(\bar x,\bar\lambda)\in X\times Y\dualspace$ of \eqref{eq:problem}.
In order to study the stability of the Lagrange multiplier $\bar\lambda$ under perturbations,
we introduce the restricted Lagrange multiplier mapping
$\Upsilon_{\bar x}\colon X\dualspace\times Y\tto Y\dualspace$ of \eqref{eq:pertubed_problem},
the main workhorse of this paper,
by means of
\[
	\forall\xi\in X\dualspace,\,\forall \upsilon\in Y\colon\quad
	\Upsilon_{\bar x}(\xi,\upsilon)
	\coloneqq
	\set{
		\lambda\in Y\dualspace
		\given
		\LL'_x(\bar x,\lambda)=\xi,\,\lambda\in\normalcone_K(g(\bar x)+\upsilon)
		}.
\]
Observe that $\bar\lambda\in\Upsilon_{\bar x}(0,0)=\Lambda(\bar x)$.
Our investigations evolve from the so-called isolated calmness
of $\Upsilon_{\bar x}$ at the point $((0,0),\bar\lambda)$ from its graph
which requires the existence of constants $\varepsilon,\delta,c>0$ such that
\begin{equation}\label{eq:isol_calmness_ups}\tag{IC}
	\begin{aligned}
	\forall (\xi,\upsilon)\in B_\varepsilon^{X\dualspace\times Y}((0,0)),\,
	&\forall \lambda\in\Upsilon_{\bar x}(\xi,\upsilon)
		\cap 
		B_\delta^{Y\dualspace}(\bar\lambda)
	\colon\quad
	\\
	&
	\norm{\lambda-\bar \lambda}_{Y\dualspace}
	\leq
	c(\norm{\xi}_{X\dualspace}+\norm{\upsilon}_{Y})
	\end{aligned}
\end{equation}
holds.
Note that \eqref{eq:isol_calmness_ups} implies $\Lambda(\bar x)=\Upsilon_{\bar x}(0,0)=\set{\bar\lambda}$
as $\Lambda(\bar x)$ is a convex set by definition.
Condition \eqref{eq:isol_calmness_ups} further claims that,
for perturbations $(\xi,\upsilon)\in X\dualspace\times Y$ small enough in norm
and each multiplier $\lambda\in Y\dualspace$ certificating stationarity of $\bar x$
for the perturbed problem \eqref{eq:pertubed_problem} and being sufficiently close to $\bar\lambda$,
the distance between $\lambda$ and $\bar\lambda$ is upper bounded by a constant multiple
of the norm of the perturbations, i.e., the restricted multiplier mapping $\Upsilon_{\bar x}$ enjoys
a one-sided local Lipschitz property which is referred to as calmness in the literature.
Following the comments after \cite[Theorem~3I.3]{DontchevRockafellar2014} indicates
that, whenever \eqref{eq:isol_calmness_ups} is valid, then one can even choose $\varepsilon=\infty$ 
potentially after making $\delta$ smaller.

Here, we embed condition \eqref{eq:isol_calmness_ups}
into the landscape of qualification conditions that apply to \eqref{eq:problem}.
As our first main result, we prove in \cref{thm:IC_gives_RZKCQ} that \eqref{eq:isol_calmness_ups}
is sufficient for the validity of the so-called 
Robinson--Zowe--Kurcyusz constraint qualification, see \cite{Robinson1976,ZoweKurcyusz1979},
which reads 
\begin{equation}\label{eq:RZKCQ}\tag{RZKCQ}
	Y = g'(\bar x)X - \radialcone_K(g(\bar x)).
\end{equation}
Let us note that \cite[Theorem~4.1]{ZoweKurcyusz1979} shows that
whenever $\bar x\in X$ is a local minimizer of \eqref{eq:problem}
where \eqref{eq:RZKCQ} holds, then $\Lambda(\bar x)$ is nonempty and bounded.
According to \cite[Proposition~4.47]{BonnansShapiro2000},
\eqref{eq:isol_calmness_ups} (for all $\delta>0$)
is implied by the so-called strict Robinson--Zowe--Kurcyusz condition
\begin{equation}\label{eq:sRZKC}\tag{sRZKC}
	Y
	=
	g'(\bar x) X - \radialcone_K(g(\bar x))\cap\bar\lambda\anni.
\end{equation}
As the latter depends, via the Lagrange multiplier $\bar\lambda$, 
on the objective function of \eqref{eq:problem},
we do not refer to it as a constraints qualification
but call \eqref{eq:sRZKC} a qualification condition instead.
As \eqref{eq:sRZKC} is sufficient for \eqref{eq:isol_calmness_ups} 
it also yields $\Lambda(\bar x)=\set{\bar\lambda}$.
In \cite{Shapiro1997}, further necessary and sufficient conditions for the uniqueness
of multipliers for \eqref{eq:problem} are studied.
Following \cite[Remark~4.49]{BonnansShapiro2000},
\eqref{eq:sRZKC} is equivalent to the so-called 
strict Mangasarian--Fromovitz condition in standard nonlinear optimization
with inequality and equality constraints,
and provides an equivalent characterization of the uniqueness of $\bar\lambda$
in this situation, see \cite{Wachsmuth2013}.
Here, we are addressing the question whether \eqref{eq:isol_calmness_ups}
is sufficient for the validity of \eqref{eq:sRZKC} in more general scenarios.
In \cite[Proposition~4.47]{BonnansShapiro2000},
a partial answer was given merely under fairly restrictive assumptions.
Our second main result, \cref{thm:IC_gives_sRZKCQ_findim},
shows that \eqref{eq:isol_calmness_ups} implies \eqref{eq:sRZKC}
whenever $Y$ is finite dimensional and $K$ is so-called polyhedric.
We also provide some counterexamples which illustrate that none of these
two conditions can be dropped in general.

The remainder of this paper is organized as follows.
In \cref{sec:notation}, 
we comment on the notation used throughout the paper.
\Cref{sec:open_mapping_theorem} investigates an extension
of the well-known generalized open mapping theorem which 
will be used in our analysis but might be of independent interest as well.
Some essentials about qualification conditions addressing problem \eqref{eq:problem}
as well as a discussion about uniqueness of Lagrange multipliers 
are presented in \cref{sec:QCs}.
The main results of the paper, drafted above, are stated and proven in \cref{sec:main}.
Throughout, illustrative examples accompany the theory and exemplify necessity of assumptions.
\Cref{sec:conclusions} closes the paper by summarizing the results 
and mentioning promising aspects of future research.

\section{Notation}\label{sec:notation}

The notation used in this paper is mainly standard and follows the one used in  \cite{BonnansShapiro2000}.

We use $\R_+$ and $\R_-$ for the nonnegative and nonpositive real numbers, respectively.
Given a Banach space $X$, $\norm{\cdot}_X\colon X\to\R_+$ denotes its norm.
Some norm in the Euclidean space $\R^n$ is represented by $\norm{\cdot}\colon\R^n\to\R_+$
for simplicity of notation.
By
\begin{equation*}
	U^X_r(\bar x) := \set{ x \in X \given \norm{x - \bar x}_X < r },
	\quad
	B^X_r(\bar x) := \set{ x \in X \given \norm{x - \bar x}_X \le r }
\end{equation*}
with $\bar x\in X$ and $r \in [0,\infty]$,
we denote open and closed $r$-balls around $\bar x$, respectively.
Furthermore, we exploit
\[
	\R x\coloneqq\set{\alpha x\in X \given \alpha\in\R},
\]
and $\R_+x$ as well as $\R_-x$ may be defined in analogous fashion. 
The (topological) dual space of $X$ will be represented by $X\dualspace$,
and $\dual{\cdot}{\cdot}_X\colon X\dualspace\times X\to\R$ is used
for the associated dual pairing.
For a set $M\subset X$,
\[
	M\polar 
	:= 
	\set{
		x\dualspace\in X\dualspace 
		\given 
		\forall x\in M\colon\,\dual{x\dualspace}{x}_X\leq 0
	}
\]
is the polar cone of $M$ which is a closed convex cone.
Moreover, $\cl(M)$, $\interior(M)$, and $\cone(M)$ denote 
the closure, the interior, and the conic hull of $M$, respectively.
The function $\iota_M\colon X\to\R\cup\set{\infty}$
given by $\iota_M(x):=0$ if $x\in M$ and $\iota_ M(x):=\infty$
if $x\in X\setminus M$ is the so-called indicator function of $M$.
Assuming that $M$ is closed and convex, and picking $\bar x\in M$,
\[
	\radialcone_M(\bar x)
	:=
	\cone(M-\set{\bar x}),
	\quad
	\tangentcone_M(\bar x)
	:=
	\cl\parens*{\radialcone_M(\bar x)},
	\quad
	\normalcone_M(\bar x)
	:=
	\parens*{\tangentcone_M(\bar x)}\polar
\]
represent the radial cone, the tangent cone, and the normal cone 
to $M$ at $\bar x$, respectively.
We note that $\normalcone_M(\bar x)=(M-\set{\bar x})\polar$ holds,
and whenever $M$ is a cone, we have $\radialcone_M(\bar x)=M+\R_-\bar x$.
Given $\tilde x\notin M$, 
we set $\radialcone_M(\tilde x):=\emptyset$, $\tangentcone_M(\tilde x):=\emptyset$,
and $\normalcone_M(\tilde x):=\emptyset$ for the purpose of completeness.
For $x\dualspace\in X\dualspace$,
\[
	{x\dualspace}\anni
	:=
	\set{
		x\in X
		\given
		\dual{x\dualspace}{x}_X=0
	}
\]
represents the (pre-)annihilator of $x\dualspace$.
Fixing $x\dualspace\in\normalcone_M(\bar x)$,
$M$ is referred to as polyhedric w.r.t.\ $(\bar x,x\dualspace)$
if
\[
	\tangentcone_M(\bar x)\cap{x\dualspace}\anni
	=
	\cl\parens*{\radialcone_M(\bar x)\cap{x\dualspace}\anni}
\]
is valid, and if this holds for all $\bar x\in M$
and all $x\dualspace\in\normalcone_M(\bar x)$,
then $M$ is called polyhedric.
For detailed information about and examples of polyhedric sets,
the interested reader is referred to the survey paper \cite{Wachsmuth2019}.
For $M\dualspace\subset X\dualspace$, 
$\wsclosure M\dualspace$ is the weak-$\star$ closure of $M\dualspace$.

For a convex function $\varphi\colon X\to\R\cup\set{\infty}$
and some point $\bar x\in X$ such that $\varphi(\bar x)<\infty$,
\[
	\partial \varphi(\bar x)
	:=
	\set{
		x\dualspace\in X\dualspace
		\given
		\forall x\in X\colon\,
		\varphi(x)\geq \varphi(\bar x) + \dual{x\dualspace}{x-\bar x}_X
		}
\]
represents the subdifferential of $\varphi$ at $\bar x$.
Whenever $M\subset X$ is a closed convex set and $\bar x\in M$ is fixed,
$\partial\iota_M(\bar x)=\normalcone_M(\bar x)$ holds by definition.

Given yet another Banach space $Y$, 
$L(X,Y)$ denotes the Banach space of all bounded linear operators $A\colon X\to Y$.
Given $A\in L(X,Y)$, $\ker A:=\set{x\in X \given Ax = 0}$ is the kernel of $A$,
and we use $A\adjoint\in L(Y\dualspace,X\dualspace)$ to represent the adjoint of $A$.
Let us emphasize that $(AX)\polar = \ker A\adjoint$ holds by definition of the adjoint.
For a Fr\'{e}chet differentiable mapping $F\colon X\to Y$ and $\bar x\in X$,
$F'(\bar x)\in L(X,Y)$ represents the Fr\'{e}chet derivative of $F$ at $\bar x$.

By
$L^2(0,1)$ ($L^\infty(0,1)$) we denote the usual Lebesgue space of
equivalence classes of
real-valued, measurable, and square-integrable (essentially bounded) functions on $(0,1)$
equipped with the common norm.
The set of all functions in $L^2(0,1)$ which only take
nonnegative values (up to sets of measure zero) 
is denoted by $L^2(0,1)_+$,
and $L^2(0,1)_-$ is defined in similar fashion.
Whenever $I \subset (0,1)$ is measurable, 
$\abs{I}$ is the (Lebesgue) measure of $I$,
$L^2(I) \subset L^2(0,1)$ is the subspace of functions vanishing outside of $I$,
and $\chi_{I}\in L^\infty(0,1)$
denotes the characteristic function of $I$
which takes value $1$ on $I$ and vanishes on $(0,1) \setminus I$.
Whenever $\Theta\subset\R$ is nonempty and compact,
$C(\Theta)$ denotes the Banach space of all real-valued, continuous functions on $\Theta$
which is equipped with the usual supremum norm.
We use $\mathcal B(\Theta)$ to represent the Borelean $\sigma$-algebra induced by $\Theta$.
The topological dual space $\mathcal M(\Theta)$ of $C(\Theta)$ 
is the Banach space of all signed regular measures
on the measurable space $(\Theta,\mathcal B(\Theta))$ and equipped with the standard variation norm.
For clarity, the real-valued functions $\omega\mapsto 0$ and $\omega\mapsto 1$
(on a generic domain) will be denoted by $\zerofunction$ and $\onefunction$, respectively.

\section{Revisiting the generalized open mapping theorem}\label{sec:open_mapping_theorem}

We present some more insights 
into the classical generalized open mapping theorem from \cite[Theorem~2.1]{ZoweKurcyusz1979}.
Although needed later on,
it also might be of independent interest
which is why we present it in a separate section.

\begin{theorem}
	\label{thm:gen_open_2}
	Let $X$ and $Y$ be Banach spaces, fix $A \in L(X,Y)$, and let $C \subset Y$ be a closed convex cone.
	Then the following are equivalent:
	\begin{enumerate}
		\item
			\label{thm:gen_open_2:1}
			$Y = A X - C$,
		\item
			\label{thm:gen_open_2:2}
			there exist $\rho \in (0,\infty)$ and $R \in (0,\infty]$
			s.t.\ 
			$U^Y_\rho(0) \subset A B^X_1(0) - C \cap B^Y_R(0)$,
		\item
			\label{thm:gen_open_2:3}
			there exist $\rho \in (0,\infty)$ and $R \in (0,\infty]$
			s.t.\ 
			$U^Y_\rho(0) \subset \cl\parens*{ A B^X_1(0) - C \cap B^Y_R(0) }$,
		\item
			\label{thm:gen_open_2:4}
			there exists $\rho \in (0,\infty)$
			such that
			\begin{equation}
				\label{eq:boundedness_2}
				\forall y\dualspace \in C\polar\colon\quad
				\norm{y\dualspace}_{Y\dualspace}
				\le
				\frac1{\rho} \norm{A\adjoint y\dualspace}_{X\dualspace}
				.
			\end{equation}
	\end{enumerate}
	If these conditions hold, the same constants $\rho$ and $R$
	can be chosen in
	\ref{thm:gen_open_2:2} and \ref{thm:gen_open_2:3}.
	Conditions \ref{thm:gen_open_2:2} and \ref{thm:gen_open_2:3}
	with constants $\rho$ and $R$
	imply \ref{thm:gen_open_2:4} with the same constant $\rho$.
	Finally, \ref{thm:gen_open_2:4} with $\rho$
	implies
	\ref{thm:gen_open_2:2} and \ref{thm:gen_open_2:3}
	with the same $\rho$ and any $R \in [\rho + \norm{A}_{L(X,Y)}, \infty]$.
\end{theorem}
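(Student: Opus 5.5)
We close the cycle \ref{thm:gen_open_2:1} $\Rightarrow$ \ref{thm:gen_open_2:2} $\Rightarrow$ \ref{thm:gen_open_2:3} $\Rightarrow$ \ref{thm:gen_open_2:4} $\Rightarrow$ \ref{thm:gen_open_2:2}, tracking the constants along the way. The implication \ref{thm:gen_open_2:2} $\Rightarrow$ \ref{thm:gen_open_2:3} is trivial with the same $\rho,R$, since a set is contained in its closure.

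For \ref{thm:gen_open_2:1} $\Rightarrow$ \ref{thm:gen_open_2:2} we invoke the classical generalized open mapping theorem \cite[Theorem~2.1]{ZoweKurcyusz1979}: $Y=AX-C$ yields some $\rho>0$ with $U^Y_\rho(0)\subset AB^X_1(0)-C\cap B^Y_1(0)$. Hence \ref{thm:gen_open_2:2} holds with $R=1$, or with $R=\infty$.

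For \ref{thm:gen_open_2:3} $\Rightarrow$ \ref{thm:gen_open_2:4}, fix $y\dualspace\in C\polar$ and $y\in U^Y_\rho(0)$. By \ref{thm:gen_open_2:3}, $y$ is a limit of points $Ax_n-c_n$ with $\norm{x_n}_X\le1$ and $c_n\in C$. Since $\dual{y\dualspace}{c_n}_Y\le0$, we get
\[
\dual{y\dualspace}{Ax_n-c_n}_Y\ge\dual{A\adjoint y\dualspace}{x_n}_X\ge-\norm{A\adjoint y\dualspace}_{X\dualspace}.
\]
Passing to the limit gives $\dual{y\dualspace}{y}_Y\ge-\norm{A\adjoint y\dualspace}_{X\dualspace}$ for all $y\in U^Y_\rho(0)$. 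Replacing $y$ by $-y$ and taking the supremum yields $\rho\norm{y\dualspace}_{Y\dualspace}\le\norm{A\adjoint y\dualspace}_{X\dualspace}$. Thus $\rho$ is preserved and $R$ plays no role.

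The main work is \ref{thm:gen_open_2:4} $\Rightarrow$ \ref{thm:gen_open_2:2} with the same $\rho$ and any $R\ge\rho+\norm{A}_{L(X,Y)}$. It suffices to treat $R_0:=\rho+\norm{A}$, since enlarging $R$ enlarges the set. We argue in two stages.

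\textbf{Stage one: the closure.} Set $M:=AB^X_1(0)-C\cap B^Y_{R_0}(0)$, which is convex. Suppose $y\notin\cl M$ with $\norm{y}_Y<\rho$. By Hahn--Banach there is $y\dualspace$ with $\norm{y\dualspace}_{Y\dualspace}=1$ and $\dual{y\dualspace}{y}_Y>\sup_M y\dualspace=\norm{A\adjoint y\dualspace}_{X\dualspace}+\sup_{c\in C\cap B_{R_0}}\dual{-y\dualspace}{c}_Y$. Since $y\dualspace$ need not lie in $C\polar$, assumption \ref{thm:gen_open_2:4} cannot be applied directly. Instead we decompose $y\dualspace$: let $\sigma(c)=\dual{y\dualspace}{c}_Y$ and use the distance function to $C\polar$, i.e., the duality
\[
\sup_{c\in C,\,\norm{c}\le1}\dual{y\dualspace}{c}_Y=\operatorname{dist}(y\dualspace,C\polar),
\]
attained at some $z\dualspace\in C\polar$ by weak-$\star$ compactness. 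Write $y\dualspace=z\dualspace+w\dualspace$ with $\norm{w\dualspace}=d:=\operatorname{dist}(y\dualspace,C\polar)$. Then $\sup_M y\dualspace\ge\norm{A\adjoint y\dualspace}$, and by \ref{thm:gen_open_2:4},
\[
\norm{A\adjoint y\dualspace}\ge\norm{A\adjoint z\dualspace}-\norm{A}d\ge\rho(1-d)-\norm{A}d.
\]
Using $-C\cap B_{R_0}$ with the sup of $\dual{y\dualspace}{-c}$ gives, after sign care, a contribution of $R_0 d$. The total is then $\ge\rho(1-d)-\norm{A}d+R_0d\ge\rho$, contradicting $\dual{y\dualspace}{y}<\rho$. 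The exact sign bookkeeping (which of $\pm C$ appears in the polar) is where I expect the main care to be needed. This is precisely the step that dictates the threshold $R\ge\rho+\norm{A}$. It yields $U^Y_\rho(0)\subset\cl M$, i.e., \ref{thm:gen_open_2:3}.

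\textbf{Stage two: removing the closure.} To pass from the closure to \ref{thm:gen_open_2:2} with the same $\rho$, we use the standard Robinson--Ursescu/iteration argument. Given $\norm{y}<\rho$, choose $\rho'<\rho$ with $\norm{y}<\rho'$, so that $U_\rho\subset\cl M$ gives $U_{\rho'}\subset\cl(\frac{\rho'}{\rho}M)$. We then approximate iteratively with geometrically shrinking residuals $y-\sum_k(Ax_k-c_k)$, rescaling each step by the factor $t^k$. The sums $\sum x_k$ and $\sum c_k$ converge because $X$ and $Y$ are complete and $C$ is a closed convex cone. Choosing the slack so that the total norms stay at most $1$ and $R_0$ uses the strict inequality $\norm{y}<\rho$, so the constants $\rho$ and $R$ are not lost. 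The same iteration shows that \ref{thm:gen_open_2:3} implies \ref{thm:gen_open_2:2} with the same constants, which together with the trivial converse gives the claim that identical $\rho,R$ work in both conditions.
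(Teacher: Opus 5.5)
Your proof is correct once one sign is fixed, but it takes a different route from the paper in the key step \ref{thm:gen_open_2:4} $\Rightarrow$ \ref{thm:gen_open_2:3}.

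\textbf{Comparison of the two routes.} Both arguments separate a point of $U^Y_\rho(0)\setminus\cl M$. Both then show that the separating functional lies quantitatively close to $C\polar$, so that \eqref{eq:boundedness_2} transfers with Lipschitz constant $\rho+\norm{A}_{L(X,Y)}$. The paper gets this closeness only approximately: it applies Ekeland's principle to $\iota_{C\cap B^Y_R(0)}-y\dualspace$, uses subdifferential calculus, and must then pass to the limit $\kappa\to R$. You instead use the exact identity $\sup_{c\in C,\,\norm{c}_Y\le1}\dual{w\dualspace}{c}_Y=\operatorname{dist}(w\dualspace,C\polar)$. This makes the threshold $R\ge\rho+\norm{A}_{L(X,Y)}$ immediate. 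The price is that the nontrivial inequality ($\ge$) in this identity is a Fenchel-duality fact that you should cite. It follows from exactness of the inf-convolution of $\iota_{C\polar}$ and $\norm{\cdot}_{Y\dualspace}$, or from a Hahn--Banach sandwich argument. Weak-$\star$ compactness only gives attainment of the distance, not the identity itself.

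\textbf{The sign fix.} With your orientation of the separation, the cone term in $\sup_M y\dualspace$ equals $R_0\operatorname{dist}(-y\dualspace,C\polar)$, not $R_0\operatorname{dist}(y\dualspace,C\polar)$. So set $d:=\operatorname{dist}(-y\dualspace,C\polar)$ and decompose $-y\dualspace=z\dualspace+w\dualspace$ with $z\dualspace\in C\polar$. Since $\norm{A\adjoint(-y\dualspace)}_{X\dualspace}=\norm{A\adjoint y\dualspace}_{X\dualspace}$, your estimate then gives $\sup_M y\dualspace\ge\rho+(R_0-\rho-\norm{A}_{L(X,Y)})\,d\ge\rho$. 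This contradicts $\dual{y\dualspace}{y}_Y<\rho$, as intended.

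\textbf{Stage two.} Here you also prove \ref{thm:gen_open_2:3} $\Rightarrow$ \ref{thm:gen_open_2:2} with unchanged $\rho$ and $R$, which the paper only attributes to the Zowe--Kurcyusz proof. Your iteration is valid for $R<\infty$, where the $c_k$ are norm-summable. For $R=\infty$ they are not, so completeness alone does not give convergence. Instead, note that $\sum_{k\le n}c_k=A\sum_{k\le n}x_k-y+r_n$ converges, where $r_n:=y-\sum_{k\le n}(Ax_k-c_k)\to0$. Alternatively, route that case through \ref{thm:gen_open_2:4}. Your main chain only needs finite $R_0$, so it is unaffected.
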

\begin{proof}
	Equivalence of \ref{thm:gen_open_2:1}, \ref{thm:gen_open_2:2}, and \ref{thm:gen_open_2:3}
	follows from \cite[Theorem~2.1]{ZoweKurcyusz1979} and its proof.

	[\ref{thm:gen_open_2:2}$\Longrightarrow$\ref{thm:gen_open_2:4}]:
	Pick $y\dualspace\in C\polar$ arbitrarily.
	Due to \ref{thm:gen_open_2:2}, for each $z\in U_1^Y(0)$,
	we find $x\in B_1^X(0)$ and $y\in C\cap B_R^Y(0)$ such that $-\rho z = Ax - y$.
	Hence, by the definition of the polar cone, we obtain
	\[
		\dual{y\dualspace}{z}_Y
		=
		\frac1\rho\dual{y\dualspace}{y}_Y
		-
		\frac1\rho\dual{A\adjoint y\dualspace}{x}_X
		\leq
		- 
		\frac{1}{\rho}\dual{A\adjoint y\dualspace}{x}_X
		\le
		\frac{1}{\rho} \norm{A\adjoint y\dualspace}_{X\dualspace}.
	\]
	Since $z \in U_1^Y(0)$ was arbitrary,
	the definition of the dual norm
	implies \eqref{eq:boundedness_2}.

	[\ref{thm:gen_open_2:4}$\Longrightarrow$\ref{thm:gen_open_2:3}]:
	We argue by contradiction.
	Suppose that 
	\ref{thm:gen_open_2:4} holds
	but \ref{thm:gen_open_2:3} is violated
	for the particular constant $\rho>0$ from \ref{thm:gen_open_2:4}
	and for $R := \rho + \norm{A}_{L(X, Y)}$.
	Consequently,
	there exists $\bar y \in  U_\rho^Y(0)$ such that
	\[
		\bar y\notin\cl\parens*{ A B^X_1(0) - C \cap B^Y_R(0) }.
	\]
	As the set on the right is closed and convex,
	applying \cite[Theorem 1.5.9]{Schirotzek2007} equips us 
	with $y\dualspace \in Y\dualspace \setminus \set{0}$
	and $\eta > 0$
	such that
	\begin{equation*}
		\forall x \in B^X_1(0),\,\forall y \in C \cap B^Y_R(0)\colon\quad
		\dual{A\adjoint y\dualspace}{x}_X - \dual{y\dualspace}{y}_Y
		\ge
		\dual{y\dualspace}{\bar y}_Y
		+
		2\eta
		.
	\end{equation*}
	Taking the infimum over all $x\in B^X_1(0)$ yields
	\begin{equation*}
		\forall y \in C \cap B^Y_R(0)\colon\quad
		\dual{-y\dualspace}{y}_Y
		\ge
		\norm{A\adjoint y\dualspace}_{X\dualspace}
		- 
		\rho \norm{y\dualspace}_{Y\dualspace}
		+
		2\eta
		.
	\end{equation*}
	From this inequality
	together with
	$\eta>0$ we find that
	\[
		\varepsilon
		:=
		\rho \norm{y\dualspace}_{Y\dualspace}
		-
		\norm{A\adjoint y\dualspace}_{X\dualspace}
		-
		\eta
		>
		0
		,
	\]
	and $0$ is an $\varepsilon$-minimal point of the function
	$\iota_{C \cap B^Y_R(0)} - y\dualspace$.
	Hence, applying Ekeland's variational principle,
	see \cite[Theorem~1.1]{Ekeland1974},
	for some $\kappa \in (0, R)$, 
	we find $\tilde y \in C \cap B^Y_R(0)$
	with $\norm{\tilde y}_Y \le \kappa$
	such that $\tilde y$ is the uniquely determined minimizer
	of $\iota_{C \cap B^Y_R(0)} - y\dualspace + \varepsilon/\kappa\,\norm{\cdot-\tilde y}_Y$.
	Hence, exploiting some calculus rules for the subdifferential,
	see, e.g., \cite[Remark~4.4.2, Propositions~4.5.1 and~4.6.2]{Schirotzek2007}, we find
	\begin{equation*}
		0 \in \partial\iota_{C \cap B^Y_R(0)}(\tilde y) 
		- \set{y\dualspace} + B^{Y\dualspace}_{\varepsilon / \kappa}(0).
	\end{equation*}
	Note that $\norm{\tilde y}_Y\leq\kappa<R$ implies
	\begin{equation*}
		\partial\iota_{C \cap B^Y_R(0)}(\tilde y)
		=
		\normalcone_{C \cap B^Y_R(0)}(\tilde y)
		=
		\normalcone_{C}(\tilde y)
		\subset
		C\polar
	\end{equation*}
	as $C$ is a closed convex cone.
	Thus, there exists $\tilde y\dualspace \in C\polar$
	with $\norm{y\dualspace - \tilde y\dualspace}_{Y\dualspace} \le \varepsilon / \kappa$.
	From \eqref{eq:boundedness_2}
	and from the choice of $\varepsilon$
	we get
	\begin{align*}
		0
		&\le
		\norm{ A\adjoint \tilde y\dualspace}_{X\dualspace}
		-
		\rho \norm{\tilde y\dualspace}_{Y\dualspace}
		\\
		&\le
		\norm{ A\adjoint y\dualspace}_{X\dualspace}
		-
		\rho \norm{y\dualspace}_{Y\dualspace}
		+
		\parens*{ \norm{A}_{L(X, Y)} + \rho } \norm{y\dualspace - \tilde y\dualspace}_{Y\dualspace}
		\\
		&\le
		-
		(\varepsilon + \eta)
		+
		\parens*{ \norm{A}_{L(X, Y)} + \rho } \frac{\varepsilon}{\kappa}
		.
	\end{align*}
	Note that $\tilde y$ and $\tilde y\dualspace$ depend on $\kappa \in (0,R)$
	but all the quantities on the right-hand side of the last inequality
	are independent of $\kappa$.
	Thus, we can pass to the limit $\kappa \to R = \norm{A}_{L(X, Y)} + \rho$
	and
	we arrive at the contradiction
	$\varepsilon + \eta \le \varepsilon$.
	Hence, \ref{thm:gen_open_2:3} holds with the same constant $\rho$ as used in \ref{thm:gen_open_2:4}
	and with $R = \rho + \norm{A}_{L(X, Y)}$.
	Consequently, it also holds for all larger values of $R$.
\end{proof}
We note that the above proof of the implication
[\ref{thm:gen_open_2:2}$\Longrightarrow$\ref{thm:gen_open_2:4}]
is implicitly contained in the proof of \cite[Theorem~4.1(a)]{ZoweKurcyusz1979}.
We are, however,
not aware of any reference which shows that
\eqref{eq:boundedness_2}
implies any of the other conditions.

\section{Qualification conditions and uniqueness of Lagrange multipliers}\label{sec:QCs}

Let us fix a KKT pair $(\bar x,\bar\lambda)\in X\times Y\dualspace$ of \eqref{eq:problem},
and define a closed convex set $\overline K\subset Y$ by means of
\begin{equation}\label{eq:bar_K}
	\overline K\coloneqq\set{y\in K\given\dual{\bar\lambda}{y-g(\bar x)}_Y=0}.
\end{equation}
A simple calculation reveals that
\begin{equation}\label{eq:radialcone_to_bar_K}
	\forall y\in\overline K\colon\quad
	\radialcone_{\overline K}(y) = \radialcone_K(y)\cap\bar\lambda\anni.
\end{equation}
The upcoming lemma provides some calculus rules for cones
and will become handy at several places in this paper.
\begin{lemma}
	\label{lem:fun_with_cones}
	\hfill
	\begin{enumerate}
	\item\label{item:fun_with_cones_general}
	Let $y \in K$ and $\bar\lambda \in \normalcone_K(y)$ be given.
	Then we have the identities
	\begin{equation*}
		\parens*{ \tangentcone_K(y) \cap \bar\lambda\anni }\polar
		=
		\wsclosure\parens*{ \normalcone_K(y) + \R_- \bar\lambda }
		=
		\wsclosure\parens*{ \radialcone_{\normalcone_K(y)}(\bar\lambda)}
		.
	\end{equation*}
	If, additionally, $Y$ is reflexive,
	we get
	\begin{equation*}
		\parens*{ \tangentcone_K(y) \cap \bar\lambda\anni }\polar
		=
		\tangentcone_{\normalcone_K(y)}(\bar\lambda).
	\end{equation*}
	\item\label{item:fun_with_cones_polyhedric}
	Let $(\bar x,\bar\lambda)\in X\times Y\dualspace$ be a KKT pair of \eqref{eq:problem},
	and fix $y\in \overline K$,
	where $\overline K$ is the set defined in \eqref{eq:bar_K}.
	Then $\bar\lambda\in\normalcone_K(y)$, 
	and if $K$ is polyhedric w.r.t.\ $(y,\bar\lambda)$, we have the identity
	\[
		\normalcone_{\overline K}(y)
		=
		\wsclosure\parens*{ \normalcone_K(y) + \R_- \bar\lambda }
		=
		\wsclosure\parens*{ \radialcone_{\normalcone_K(y)}(\bar\lambda)}
		.
	\]
	If, additionally, $Y$ is reflexive,
	we get
	\begin{equation*}
		\normalcone_{\overline K}(y)
		=
		\tangentcone_{\normalcone_K(y)}(\bar\lambda)
		.
	\end{equation*}
	\end{enumerate}
\end{lemma}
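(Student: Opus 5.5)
For \ref{item:fun_with_cones_general}, the plan is to compute polars of intersections and sums using the bipolar theorem in the dual pairing $(Y,Y\dualspace)$.

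First, since $\tangentcone_K(y)$ and $\R\bar\lambda$ generate the cone $\tangentcone_K(y)\cap\bar\lambda\anni$, its polar is the weak-$\star$ closure of the sum of the individual polars. We have $\tangentcone_K(y)\polar=\normalcone_K(y)$ and $(\bar\lambda\anni)\polar=\R\bar\lambda$. This gives
\[
	\parens*{\tangentcone_K(y)\cap\bar\lambda\anni}\polar=\wsclosure\parens*{\normalcone_K(y)+\R\bar\lambda}.
\]
Concretely, the polar of $\tangentcone_K(y)\cap\bar\lambda\anni$ equals the polar of the preannihilator-type set $\{v\mid \dual{\mu}{v}\le0\ \forall\mu\in\normalcone_K(y)+\R\bar\lambda\}$. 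By the weak-$\star$ bipolar theorem, this is the weak-$\star$ closed convex cone generated by $\normalcone_K(y)+\R\bar\lambda$.

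Next, since $\bar\lambda\in\normalcone_K(y)$ and the latter is a convex cone, we have $\normalcone_K(y)+\R_+\bar\lambda=\normalcone_K(y)$. Hence $\normalcone_K(y)+\R\bar\lambda=\normalcone_K(y)+\R_-\bar\lambda$. By the formula $\radialcone_M(\bar x)=M+\R_-\bar x$ for cones $M$ noted in \cref{sec:notation}, this set is $\radialcone_{\normalcone_K(y)}(\bar\lambda)$.

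In the reflexive case, weak-$\star$ and weak closures coincide. Weak and norm closures of convex sets agree by Mazur's lemma, so $\wsclosure$ becomes the norm closure. The norm closure of the radial cone is by definition $\tangentcone_{\normalcone_K(y)}(\bar\lambda)$.

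The one point needing care is the bipolar step: $\bar\lambda\anni\subset Y$ and $\tangentcone_K(y)$ are closed in $Y$, and we take polars into $Y\dualspace$. I would argue directly. The inclusion $\supset$ is immediate, since elements of $\normalcone_K(y)+\R\bar\lambda$ are nonpositive on the intersection and polars are weak-$\star$ closed. For $\subset$, take $\mu$ outside the weak-$\star$ closed convex cone $D:=\wsclosure(\normalcone_K(y)+\R\bar\lambda)$. Separate $\mu$ from $D$ in the weak-$\star$ topology, whose dual is $Y$, to obtain $v\in Y$ with $\dual{\mu}{v}>0\ge\dual{\nu}{v}$ for all $\nu\in D$. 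Testing with $\nu\in\normalcone_K(y)$ gives $v\in\normalcone_K(y)\polar=\tangentcone_K(y)$, where the last equality is the bipolar theorem in $Y$ for the closed convex cone $\tangentcone_K(y)$. Testing with $\nu=\pm\bar\lambda$ gives $v\in\bar\lambda\anni$. Thus $\mu$ is not in the polar of the intersection.

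For \ref{item:fun_with_cones_polyhedric}, first show $\bar\lambda\in\normalcone_K(y)$. For $z\in K$ we have $\dual{\bar\lambda}{z-y}=\dual{\bar\lambda}{z-g(\bar x)}-\dual{\bar\lambda}{y-g(\bar x)}=\dual{\bar\lambda}{z-g(\bar x)}\le0$, using $y\in\overline K$ and $\bar\lambda\in\normalcone_K(g(\bar x))=(K-\{g(\bar x)\})\polar$.

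Next, by \eqref{eq:radialcone_to_bar_K}, $\normalcone_{\overline K}(y)=\radialcone_{\overline K}(y)\polar=\parens*{\radialcone_K(y)\cap\bar\lambda\anni}\polar$. Polars are unchanged under closure, so this equals $\cl\parens*{\radialcone_K(y)\cap\bar\lambda\anni}\polar$. Polyhedricity w.r.t. $(y,\bar\lambda)$ turns this into $\parens*{\tangentcone_K(y)\cap\bar\lambda\anni}\polar$. Part \ref{item:fun_with_cones_general} then yields all the claimed identities, including the reflexive one.
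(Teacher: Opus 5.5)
Your proposal is correct and follows the paper's proof essentially step by step. In part (i) you write the polar of the intersection as $\wsclosure(\normalcone_K(y)+\R\bar\lambda)$, absorb $\R_+\bar\lambda$ into the cone, and pass to norm closures in the reflexive case; in part (ii) you use \eqref{eq:radialcone_to_bar_K} and polyhedricity to reduce to part (i). The only difference is that you verify the polar-of-an-intersection formula yourself by a weak-$\star$ separation argument, whereas the paper cites formula (2.32) of Bonnans and Shapiro.
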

\begin{proof}
	To start, let us prove the first statement of assertion \ref{item:fun_with_cones_general}.
	We equip $Y$ with its strong topology and $Y\dualspace$ with its weak-$\star$ topology.
	Consequently,
	we can apply \cite[(2.32)]{BonnansShapiro2000}
	and obtain
	\begin{align*}
		\parens*{ \tangentcone_K(y) \cap \bar\lambda\anni }\polar
		=
		\wsclosure\parens*{ \parens*{\tangentcone_K(y)}\polar + (\bar\lambda\anni)\polar }
		=
		\wsclosure\parens*{ \normalcone_K(y) + \R \bar\lambda }
		.
	\end{align*}
	Due to $\bar\lambda \in \normalcone_K(y)$,
	we get
	$\normalcone_K(y) + \R \bar\lambda = \normalcone_K(y) + \R_- \bar\lambda$
	and, thus, the first identity has been shown.
	The second identity follows as $\normalcone_K(y)$ is a cone.
	Hence, the first statement of assertion \ref{item:fun_with_cones_general}
	has already been verified.
	If $Y$ is reflexive, we can equip $Y\dualspace$
	with its strong topology.
	Consequently, the weak-$\star$ closure above can be replaced by the strong closure.
	Hence, we obtain
	\begin{equation*}
		\parens*{ \tangentcone_K(y) \cap \bar\lambda\anni }\polar
		=
		\cl\parens*{ \radialcone_{\normalcone_K(y)}(\bar\lambda) }
		=
		\tangentcone_{\normalcone_K(y)}(\bar\lambda)
		,
	\end{equation*}
	verifying the second statement of assertion \ref{item:fun_with_cones_general}.
	
	Let us now proceed with the proof of assertion \ref{item:fun_with_cones_polyhedric}.
	To start, we note that $\bar\lambda\in\normalcone_K(g(\bar x))$ and $y\in\overline K$
	yield $\bar\lambda\in\normalcone_K(y)$.
	As $K$ is polyhedric w.r.t.\ $(y,\bar\lambda)$,
	we have $\cl(\radialcone_K(y)\cap\bar\lambda\anni)=\tangentcone_K(y)\cap\bar\lambda\anni$.
	Thus, with the aid of \eqref{eq:radialcone_to_bar_K}, we find
	\begin{equation*}
		\normalcone_{\overline K}(y)
		=
		\parens*{\tangentcone_{\overline K}(y)}\polar
		=
		\parens*{\radialcone_{\overline K}(y)}\polar
		=
		\parens*{\radialcone_K(y)\cap\bar\lambda\anni}\polar
		=
		\parens*{\tangentcone_K(y)\cap\bar\lambda\anni}\polar,
	\end{equation*}
	and the claimed identities follow from assertion \ref{item:fun_with_cones_general}.
\end{proof}

Again, let us fix a KKT pair $(\bar x,\bar\lambda)\in X\times Y\dualspace$ of \eqref{eq:problem}.
We note that \eqref{eq:sRZKC} is equivalent to each of the following conditions:
\begin{subequations}\label{eq:sRZKC_alt}
	\begin{align}
		\label{eq:sRZKC_alt_i}
			\exists\rho>0\colon\quad
			B^Y_\rho(0)&\subset g'(\bar x) B_1^X(0) - (\overline K-\set{g(\bar x)})\cap B_1^Y(0),
			\\
		\label{eq:sRZKC_alt_ii}
			0&\in\interior(\set{g(\bar x)}+g'(\bar x)X - \overline{K}),
			\\
		\label{eq:sRZKC_alt_iii}
			Y &= g'(\bar x)X - \radialcone_{\overline K}(g(\bar x)).
	\end{align}
\end{subequations}
Indeed, thanks to \eqref{eq:radialcone_to_bar_K} and $g(\bar x)\in\overline K$,
\eqref{eq:sRZKC} and \eqref{eq:sRZKC_alt_iii} are equivalent.
Furthermore, the implications
[\eqref{eq:sRZKC_alt_i} $\Longrightarrow$ \eqref{eq:sRZKC_alt_ii} $\Longrightarrow$ \eqref{eq:sRZKC_alt_iii}]
are obvious, and [\eqref{eq:sRZKC_alt_iii} $\Longrightarrow$ \eqref{eq:sRZKC_alt_i}]
follows from \cite[Theorem~2.1]{ZoweKurcyusz1979}. 
Clearly, \eqref{eq:sRZKC} is sufficient for \eqref{eq:RZKCQ}.

We are also concerned with the conditions
\begin{subequations}\label{eq:weak_sRZKC}
	\begin{align}
		\label{eq:weak_sRZKC_i}
		Y &= g'(\bar x)X - \tangentcone_K(g(\bar x))\cap\bar\lambda\anni,
		\\
		\label{eq:weak_sRZKC_ii}
		Y &= \cl\parens*{g'(\bar x)X - \tangentcone_K(g(\bar x))\cap\bar\lambda\anni}.
	\end{align}
\end{subequations}
Obviously, we have the implications
[\eqref{eq:sRZKC} $\Longrightarrow$ \eqref{eq:weak_sRZKC_i} $\Longrightarrow$ \eqref{eq:weak_sRZKC_ii}],
and all three conditions are equivalent if $Y$ is finite dimensional
and $K$ is polyhedric.
Indeed, in the latter case, the definition of polyhedricity yields
\[
	\cl\parens*{\radialcone_K(g(\bar x))\cap\bar\lambda\anni}
	=
	\tangentcone_K(g(\bar x))\cap\bar\lambda\anni
\]
which guarantees that \eqref{eq:weak_sRZKC_ii} equals
\[ 	
	Y = \cl\parens*{g'(\bar x)X - \radialcone_K(g(\bar x))\cap\bar\lambda\anni},	
\]
and the latter is equivalent to \eqref{eq:sRZKC}
whenever $Y$ is finite dimensional,
see, e.g., \cite[Theorem~2.17]{BonnansShapiro2000}.
In general, however, these relations might be strict.
Indeed, \cref{ex:weak_sRZKC_vs_RZKCQ} from below
presents a situation where \eqref{eq:weak_sRZKC_i} is valid while \eqref{eq:sRZKC} fails.
Furthermore, if $g'(\bar x)$ has a dense range and $K=\set{0}$,
then \eqref{eq:weak_sRZKC_ii} holds while \eqref{eq:weak_sRZKC_i} is violated,
and the particular choice of $\bar\lambda$ is, obviously, not relevant in this situation.

\begin{example}\label{ex:weak_sRZKC_vs_RZKCQ}
	Let us investigate \eqref{eq:problem}, where
	\begin{align*}
		X &= L^2(0,1),
		&
		Y &= L^2(0,1) \times L^2(0,1),
		&
		K &= L^2(0,1)_+ \times L^2(0,1)_+,
		\\
		\bar y & = g(\bar x) = (\zerofunction, \onefunction),
		&
		f'(\bar x) &= \onefunction,
		&
		g'(\bar x) x &= (x,\zerofunction)
		.
	\end{align*}
	We note that the precise choice of $\bar x\in X$ is not relevant here.
	Elementary calculations yield
	\begin{equation}\label{eq:some_cones_in_L2}
	\begin{aligned}
		\radialcone_K(\bar y)
		&=
		L^2(0,1)_+ \times \parens*{ L^2(0,1)_+ + L^\infty(0,1) }
		,
		\\
		\tangentcone_K(\bar y)
		&=
		L^2(0,1)_+ \times L^2(0,1)
		,
		\\
		\normalcone_K(\bar y)
		&=
		L^2(0,1)_- \times \set{\zerofunction}
		.
	\end{aligned}
	\end{equation}
	On the one hand, 
	we easily see that \eqref{eq:RZKCQ} is violated in the situation at hand.
	On the other hand, we find $\bar\lambda=(-\onefunction,\zerofunction)\in\Lambda(\bar x)$.
	We observe that
	\begin{equation}\label{eq:some_annihilated_cones_in_L2}
	\begin{aligned}
		\radialcone_K(\bar y) \cap \bar\lambda\anni
		&=
		\set{\zerofunction} \times \parens*{ L^2(0,1)_+ + L^\infty(0,1) }
		,
		\\
		\tangentcone_K(\bar y) \cap \bar\lambda\anni
		&=
		\set{\zerofunction} \times L^2(0,1)
		,
	\end{aligned}
	\end{equation}
	showing, particularly, that \eqref{eq:weak_sRZKC_i} holds
	while \eqref{eq:sRZKC} fails.
\end{example}

Whenever $K$ is a cone,
\cite[Proposition~2.1]{Shapiro1997} provides a characterization 
of the uniqueness of multipliers.
Furthermore, it follows from \cite[Theorem~2.2]{Shapiro1997} that \eqref{eq:weak_sRZKC_ii}
already implies $\Lambda(\bar x)=\set{\bar\lambda}$,
and that the converse is true if $\radialcone_{\normalcone_K(g(\bar x))}(\bar\lambda)$
is weakly-$\star$ closed.
As it turns out, all these facts remain true if $K$ is merely a closed convex set.
In the following result, we summarize these observations,
and present a proof in order to keep the presentation self-contained.

\begin{theorem}\label{thm:uniqueness}
	Let $(\bar x,\bar\lambda)\in X\times Y\dualspace$
	be a KKT pair of \eqref{eq:problem}.
	Then the following assertions hold.
	\begin{enumerate}
		\item\label{item:uniqueness_char}
			Condition $\Lambda(\bar x)=\set{\bar\lambda}$ is equivalent to
			\begin{equation}\label{eq:uniqueness_char}
				\ker g'(\bar x)\adjoint 
				\cap 
				\radialcone_{\normalcone_K(g(\bar x))}(\bar\lambda)
				=
				\set{ 0 }
				.
			\end{equation}
		\item\label{item:uniqueness_weak_sRZKC_suff}
			Condition \eqref{eq:weak_sRZKC_ii} is sufficient for \eqref{eq:uniqueness_char}.
		\item\label{item:uniqueness_weak_sRZKC_nec}
			Let $\radialcone_{\normalcone_K(g(\bar x))}(\bar\lambda)$ be weakly-$\star$ closed.
			Then condition \eqref{eq:weak_sRZKC_ii} is equivalent to \eqref{eq:uniqueness_char}.
	\end{enumerate}
\end{theorem}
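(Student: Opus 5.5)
For \ref{item:uniqueness_char}, I would use that $\Lambda(\bar x)$ is convex and that $\LL'_x(\bar x,\lambda)=f'(\bar x)+g'(\bar x)\adjoint\lambda$.
Suppose first that $\lambda\in\Lambda(\bar x)$ with $\lambda\neq\bar\lambda$.
Then $\mu\coloneqq\lambda-\bar\lambda$ satisfies $g'(\bar x)\adjoint\mu=0$.
Moreover, $\mu\in\normalcone_K(g(\bar x))-\set{\bar\lambda}\subset\radialcone_{\normalcone_K(g(\bar x))}(\bar\lambda)$, so $\mu$ is a nonzero element of the intersection in \eqref{eq:uniqueness_char}.
Conversely, let $0\neq\mu\in\ker g'(\bar x)\adjoint\cap\radialcone_{\normalcone_K(g(\bar x))}(\bar\lambda)$.
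Then $\mu=t(\nu-\bar\lambda)$ for some $t>0$ and $\nu\in\normalcone_K(g(\bar x))$, with $\nu\neq\bar\lambda$.
Since $g'(\bar x)\adjoint\nu=g'(\bar x)\adjoint\bar\lambda$, we get $\nu\in\Lambda(\bar x)\setminus\set{\bar\lambda}$.
This settles the equivalence.

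For \ref{item:uniqueness_weak_sRZKC_suff} and \ref{item:uniqueness_weak_sRZKC_nec}, the key step is a polarity computation for the set
\[
	M\coloneqq g'(\bar x)X-\tangentcone_K(g(\bar x))\cap\bar\lambda\anni .
\]
This set is a convex cone.
By the bipolar theorem, \eqref{eq:weak_sRZKC_ii}, i.e.\ $\cl(M)=Y$, is equivalent to $M\polar=\set{0}$.
A functional $y\dualspace$ lies in $M\polar$ if and only if it annihilates $g'(\bar x)X$ and is nonnegative on $\tangentcone_K(g(\bar x))\cap\bar\lambda\anni$.
The first requirement gives $y\dualspace\in\ker g'(\bar x)\adjoint$; the second gives $-y\dualspace\in(\tangentcone_K(g(\bar x))\cap\bar\lambda\anni)\polar$.
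Applying \cref{lem:fun_with_cones}\ref{item:fun_with_cones_general} with $y=g(\bar x)$ identifies this polar cone.
Since the kernel is a linear subspace, the sign can be dropped, and we obtain
\[
	M\polar=\ker g'(\bar x)\adjoint\cap\wsclosure\parens*{\radialcone_{\normalcone_K(g(\bar x))}(\bar\lambda)}.
\]

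Therefore \eqref{eq:weak_sRZKC_ii} is equivalent to this intersection with the weak-$\star$ closure being $\set{0}$.
This condition is stronger than \eqref{eq:uniqueness_char}, because the radial cone is contained in its weak-$\star$ closure; this proves \ref{item:uniqueness_weak_sRZKC_suff}.
Under the weak-$\star$ closedness assumption of \ref{item:uniqueness_weak_sRZKC_nec}, the two conditions coincide, which gives the claimed equivalence.

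The only delicate point is the polarity bookkeeping: keeping track of the sign, and correctly pairing the norm topology on $Y$ with the weak-$\star$ topology on $Y\dualspace$.
For this I would invoke \cref{lem:fun_with_cones} directly rather than re-deriving the polar of the intersection.
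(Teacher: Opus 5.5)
Your proof is correct and follows the paper's route. For (i) you give the same direct multiplier argument; writing $\mu=t(\nu-\bar\lambda)$ with $t>0$ simply avoids the paper's case distinction $\alpha>0$ versus $\alpha=0$. For (ii) and (iii) you use the same polarity computation via \cref{lem:fun_with_cones} together with the bipolar theorem.

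There is one small bookkeeping slip. Because $\ker g'(\bar x)\adjoint$ is a subspace, the minus sign can be pulled outside the intersection, not dropped. The correct formula is $M\polar=-\parens*{\ker g'(\bar x)\adjoint\cap\wsclosure\parens*{\radialcone_{\normalcone_K(g(\bar x))}(\bar\lambda)}}$. This does not affect your conclusions, since you only test whether this set equals $\set{0}$.
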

\begin{proof}
	For the proof of assertion~\ref{item:uniqueness_char},
	we verify both implications separately.\\
	$[\Longrightarrow]$ Pick $\mu\in\ker g'(\bar x)\adjoint\cap \radialcone_{\normalcone_K(g(\bar x))}(\bar\lambda)$ arbitrarily. Then, by definition of the radial cone and the fact that
	$\normalcone_K(g(\bar x))$ is a cone, we find $\lambda\in\normalcone_K(g(\bar x))$
	and $\alpha\geq 0$ such that $\mu=\lambda-\alpha\bar\lambda$.
	If $\alpha>0$ holds, 
	we have $\mu/\alpha+\bar\lambda=\lambda/\alpha\in\normalcone_K(g(\bar x))$
	since $\normalcone_K(g(\bar x))$ is a cone.
	Furthermore,
	\[
		\LL'_x(\bar x,\mu/\alpha + \bar\lambda)
		=
		\LL'_x(\bar x,\bar\lambda) + \frac1\alpha g'(\bar x)\adjoint\mu
		=
		0
	\]
	holds true, so that $\mu/\alpha + \bar\lambda\in\Lambda(\bar x)$ is obtained.
	Hence, due to $\Lambda(\bar x)=\set{\bar\lambda}$ and $\alpha>0$,
	$\mu=0$ follows.
	If $\alpha=0$, we have $\mu=\lambda\in\normalcone_K(g(\bar x))$,
	and $\mu+\bar\lambda\in\normalcone_K(g(\bar x))$ follows
	as $\normalcone_K(g(\bar x))$ is a convex cone.
	As we have
	\[
		\LL'_x(\bar x,\mu+\bar\lambda)
		=
		\LL'_x(\bar x,\bar\lambda) + g'(\bar x)\adjoint\mu 
		=
		0,
	\]
	$\mu+\bar\lambda\in\Lambda(\bar x)$ follows,
	and due to $\Lambda(\bar x)=\set{\bar\lambda}$,
	we also end up with $\mu=0$.
	Taking both cases together, we have shown validity of \eqref{eq:uniqueness_char}.
	\\
	$[\Longleftarrow]$ Pick some arbitrary multiplier $\lambda\in\Lambda(\bar x)$
	and consider $\mu:=\lambda-\bar\lambda$.
	Then, due to $\bar\lambda\in\Lambda(\bar x)$, we obviously have $\mu\in\ker g'(\bar x)\adjoint$,
	and
	\[
		\mu \in \normalcone_K(g(\bar x))-\set{\bar\lambda}
		\subset
		\normalcone_K(g(\bar x)) + \R_-\bar\lambda
		=
		\radialcone_{\normalcone_K(g(\bar x))}(\bar\lambda)
	\]
	is obtained since $\normalcone_K(g(\bar x))$ is a cone containing $\bar\lambda$.
	According to \eqref{eq:uniqueness_char},
	$\mu=0$ follows, i.e., $\lambda=\bar\lambda$ has been shown.
	
	For the proof of assertions \ref{item:uniqueness_weak_sRZKC_suff} and \ref{item:uniqueness_weak_sRZKC_nec},
	we note that \cref{lem:fun_with_cones}\,\ref{item:fun_with_cones_general} yields
	\begin{equation}\label{eq:some_polar_relation}
		\begin{aligned}
		\parens*{%
			g'(\bar x)X - \tangentcone_K(g(\bar x))\cap\bar\lambda\anni
		}\polar
		&=
		\parens*{g'(\bar x)X}\polar
		\cap
		\parens{-\tangentcone_K(g(\bar x))\cap\bar\lambda\anni}\polar
		\\
		&=
		\ker g'(\bar x)\adjoint
		\cap
		\parens*{-\wsclosure\parens*{\radialcone_{\normalcone_K(g(\bar x))}(\bar\lambda)}}.
		\end{aligned}
	\end{equation}
	On the one hand, if \eqref{eq:weak_sRZKC_ii} holds, 
	polarization gives
	\begin{equation}\label{eq:polar_weak_sRZKC}
		\set{ 0 } 
		= 
		\parens*{%
			g'(\bar x)X - \tangentcone_K(g(\bar x))\cap\bar\lambda\anni
		}\polar,
	\end{equation}
	and \eqref{eq:some_polar_relation} implies validity of \eqref{eq:uniqueness_char},
	i.e., assertion \ref{item:uniqueness_weak_sRZKC_suff} holds.
	On the other hand, if \eqref{eq:uniqueness_char} holds while
	$\radialcone_{\normalcone_K(g(\bar x))}(\bar\lambda)$ is 
	is weakly-$\star$ closed, then \eqref{eq:uniqueness_char}
	and \eqref{eq:some_polar_relation} show that
	\eqref{eq:polar_weak_sRZKC} is valid.
	As $g'(\bar x)X-\tangentcone_K(g(\bar x))\cap\bar\lambda\anni$
	is a convex cone, validity of \eqref{eq:weak_sRZKC_ii} follows, e.g.,
	from \cite[Proposition~2.40]{BonnansShapiro2000}.
	This proves that assertion \ref{item:uniqueness_weak_sRZKC_nec} is true.
\end{proof}

It should be observed that 
conditions \eqref{eq:RZKCQ} and \eqref{eq:weak_sRZKC_i} are, in general, not related.
Indeed, as \eqref{eq:RZKCQ} does not guarantee the uniqueness of multipliers,
it cannot be sufficient for \eqref{eq:weak_sRZKC_i} in general.
Conversely, \cref{ex:weak_sRZKC_vs_RZKCQ} depicts that validity of \eqref{eq:weak_sRZKC_i}
does not necessarily imply that \eqref{eq:RZKCQ} holds.
Let us mention that
the validity of \eqref{eq:weak_sRZKC_ii} already implies
\[
	Y = \cl\parens*{g'(\bar x)X - \tangentcone_K(g(\bar x))},
\] 
and the latter is equivalent to \eqref{eq:RZKCQ}
whenever $Y$ is finite dimensional,
see, e.g., \cite[Proposition~2.97]{BonnansShapiro2000}.
Hence, the situation
in \cref{ex:weak_sRZKC_vs_RZKCQ} has to be fully attributed to 
the infinite-dimensional setting therein.

The following example illustrates that none of the conditions
\eqref{eq:sRZKC}, \eqref{eq:weak_sRZKC_i}, or \eqref{eq:weak_sRZKC_ii}
is necessary for the uniqueness of the multiplier.
We note that an example illustrating this has not been presented in \cite{Shapiro1997}.

\begin{example}\label{ex:multiplier_unique_but_nothing_else}
	Let us investigate \eqref{eq:problem} where
	\[
		\begin{aligned}
			X&=\R^2,&\quad	Y&=\R^3,& &&
			\\
			f(x)&=\sqrt 2 x_1 + x_2,&\quad g(x)&=(x_1,x_1,x_2),&\quad K&=\set{y\in\R^3 \given (y_1^2+y_2^2)^{1/2}\leq y_3}.&
		\end{aligned}
	\]
	Then \eqref{eq:problem} is a linear second-order cone problem.
	It is not hard to check that the feasible set of \eqref{eq:problem} equals $\set{x\in\R^2 \given \sqrt 2|x_1|\leq x_2}$,
	so $\bar x=(0,0)$ is a minimizer of \eqref{eq:problem}.
	As we have $\bar y = g(\bar x) = (0,0,0)$, we find
	\[
		\radialcone_K(\bar y) = \tangentcone_K(\bar y) = K,\qquad
		\normalcone_K(\bar y) = K\polar = \set{\lambda\in\R^3 \given (\lambda_1^2+\lambda_2^2)^{1/2}\leq-\lambda_3}.
	\]
	Hence, a simple calculation reveals that \eqref{eq:RZKCQ} is valid at $\bar x$.
	Let us investigate the associated KKT conditions
	\[
		(0,0) = (\sqrt 2,1) + (\lambda_1 + \lambda_2,\lambda_3),\quad \lambda\in K\polar.
	\]
	From the second component we infer $\lambda_3=-1$,
	so, for the first component, we need $\lambda_1+\lambda_2=-\sqrt 2$ and $\lambda_1^2+\lambda_2^2\leq 1$
	which is only possible if $\lambda_1=\lambda_2=-\sqrt 2/2$.
	Hence, $\bar\lambda=(-\sqrt 2/2,-\sqrt 2/2,-1)$ is the uniquely determined multiplier associated with $\bar x$.
	
	Noting that
	\[
		\radialcone_K(\bar y)\cap\bar\lambda\anni
		=
		\tangentcone_K(\bar y)\cap\bar\lambda\anni
		=
		K\cap\bar\lambda\anni
		=
		\set{(t,t,-\sqrt 2 t)\in\R^3 \given t\leq 0},
	\]
	it is obvious that \eqref{eq:weak_sRZKC_ii} is violated,
	and so are \eqref{eq:weak_sRZKC_i} and \eqref{eq:sRZKC},
	i.e., none of these conditions is necessary for the uniqueness of the multiplier.
\end{example}

\section{Stability of Lagrange multipliers}\label{sec:main}

Throughout the section, 
we investigate a given KKT pair $(\bar x,\bar\lambda)\in X\times Y\dualspace$
which satisfies the isolated calmness type condition \eqref{eq:isol_calmness_ups} 
for the restricted multiplier mapping $\Upsilon_{\bar x}$.

Let us recall that validity of \eqref{eq:sRZKC}
guarantees that \eqref{eq:isol_calmness_ups} holds
according to \cite[Proposition~4.47]{BonnansShapiro2000}.
Here, we are concerned with the question whether
the converse might also be true---potentially under additional assumptions.

To start, we show that \eqref{eq:isol_calmness_ups} 
is always sufficient for \eqref{eq:RZKCQ}.

\begin{theorem}\label{thm:IC_gives_RZKCQ}
	Let $(\bar x,\bar\lambda)\in X\times Y\dualspace$ be a KKT pair of \eqref{eq:problem}
	and assume that \eqref{eq:isol_calmness_ups} is valid.
	Then \eqref{eq:RZKCQ} holds.
\end{theorem}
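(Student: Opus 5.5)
The plan is to reduce \eqref{eq:RZKCQ} to the dual estimate in \cref{thm:gen_open_2}\,\ref{thm:gen_open_2:4}, and to extract that estimate from \eqref{eq:isol_calmness_ups} by testing it with the multipliers $\bar\lambda+t\mu$. Set $\bar y:=g(\bar x)$ and $A:=g'(\bar x)$.

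\emph{Step 1 (dual estimate).} Fix $\mu\in\normalcone_K(\bar y)$ with $\mu\neq0$ and $t>0$. Since $\normalcone_K(\bar y)$ is a convex cone containing $\bar\lambda$, we have $\lambda_t:=\bar\lambda+t\mu\in\normalcone_K(\bar y)$. Moreover,
\[
	\LL'_x(\bar x,\lambda_t)=tA\adjoint\mu .
\]
Hence $\lambda_t\in\Upsilon_{\bar x}(tA\adjoint\mu,0)$. For $t$ so small that $t\norm{A\adjoint\mu}_{X\dualspace}\le\varepsilon$ and $t\norm{\mu}_{Y\dualspace}\le\delta$, condition \eqref{eq:isol_calmness_ups} gives $t\norm{\mu}_{Y\dualspace}\le ct\norm{A\adjoint\mu}_{X\dualspace}$. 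Thus
\[
	\norm{\mu}_{Y\dualspace}\le c\,\norm{A\adjoint\mu}_{X\dualspace}
	\quad\text{for all }\mu\in\normalcone_K(\bar y)=\parens*{\tangentcone_K(\bar y)}\polar .
\]
This is \eqref{eq:boundedness_2} with $\rho=1/c$ and $C=\tangentcone_K(\bar y)$, which is a closed convex cone.

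\emph{Step 2 (tangent-cone version).} \Cref{thm:gen_open_2} now yields $Y=AX-\tangentcone_K(\bar y)$. It also gives the quantitative inclusion $U^Y_\rho(0)\subset AB^X_1(0)-\tangentcone_K(\bar y)\cap B^Y_R(0)$.

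\emph{Step 3 (radial-cone version; the main obstacle).} We must pass from $\tangentcone_K(\bar y)$ to $\radialcone_K(\bar y)$. This is automatic in finite dimensions but not in general, so the perturbation $\upsilon$ in \eqref{eq:isol_calmness_ups} has to be used. I would mimic the proof of \cref{thm:gen_open_2}\,[\ref{thm:gen_open_2:4}$\Rightarrow$\ref{thm:gen_open_2:3}] with the convex set $(K-\set{\bar y})\cap B^Y_R(0)$ in place of $C\cap B^Y_R(0)$. The target is $U^Y_{\rho'}(0)\subset\cl\parens*{AB^X_1(0)-(K-\set{\bar y})\cap B^Y_R(0)}$, after which the Zowe--Kurcyusz argument \cite[Theorem~2.1]{ZoweKurcyusz1979} removes the closure and yields \eqref{eq:RZKCQ}.

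Argue by contradiction. Separation and Ekeland's principle produce a point $y=\bar y+\upsilon\in K$ with $\norm{\upsilon}_Y\le\kappa$ small. They also produce a normal $\mu\in\normalcone_K(y)$, normalised by $\norm{\mu}_{Y\dualspace}=1$, with $\norm{A\adjoint\mu}_{X\dualspace}$ small compared with $\norm{\mu}_{Y\dualspace}$.

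To feed this into \eqref{eq:isol_calmness_ups} I need a multiplier near $\bar\lambda$ lying in $\normalcone_K(\bar y+\upsilon)$. The natural candidate is $\bar\lambda+s\mu$ with $s=\delta/2$. It belongs to $\normalcone_K(y)$ when $\bar\lambda\in\normalcone_K(y)$, that is, when $y\in\overline K$ (\cref{lem:fun_with_cones}\,\ref{item:fun_with_cones_polyhedric}). Then \eqref{eq:isol_calmness_ups} with $\xi=sA\adjoint\mu$ gives
\[
	s\le c\parens*{s\norm{A\adjoint\mu}_{X\dualspace}+\norm{\upsilon}_Y},
\]
which is a contradiction once $\kappa$ and $\norm{A\adjoint\mu}_{X\dualspace}$ are small relative to $\delta/c$.

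The difficulty is that Ekeland's point $y$ lies only in $K$, not in $\overline K$. I would first try to handle this by a compromise multiplier. One option is to perturb $\bar\lambda$ to some $\lambda\in\normalcone_K(y)$ with $\norm{\lambda-\bar\lambda}_{Y\dualspace}$ controlled by $\norm{\upsilon}_Y$, using the remaining freedom in $\xi$ to absorb $A\adjoint(\lambda-\bar\lambda)$. The other is to choose the radius $\kappa$ of the Ekeland step in terms of $\delta/c$, so that the error $c\norm{\upsilon}_Y$ stays below $\delta/2$.

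I expect this step, which links the Ekeland point to an admissible perturbed multiplier, to be the crux of the proof.
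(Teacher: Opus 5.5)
Your Steps 1 and 2 are correct, but they only give $Y=g'(\bar x)X-\tangentcone_K(\bar y)$. The information extracted in Step 1, which uses only $\upsilon=0$, cannot by itself yield \eqref{eq:RZKCQ}. Indeed, in \cref{ex:weak_sRZKC_vs_RZKCQ} one has $\normalcone_K(\bar y)=L^2(0,1)_-\times\set{\zerofunction}$ and $\norm{\mu}_{Y\dualspace}=\norm{g'(\bar x)\adjoint\mu}_{X\dualspace}$ for all $\mu$ in this cone, yet \eqref{eq:RZKCQ} fails.

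So everything hinges on Step 3, and there the proposal has a genuine gap, which you locate correctly but do not close. Your Ekeland point $y=\bar y+\upsilon$ is produced from the separating functional alone. Nothing ties $\normalcone_K(y)$ to $\bar\lambda$, so $\bar\lambda+s\mu\in\normalcone_K(y)$ is unjustified, and neither suggested repair works. First, a multiplier $\lambda\in\normalcone_K(y)$ with $\norm{\lambda-\bar\lambda}_{Y\dualspace}$ controlled by $\norm{y-\bar y}_Y$ need not exist. Take $K=\set{y\in\R^2\given y_1\le 0,\,y_2\le 0}$, the base point $0$, the normal $(1,1)\in\normalcone_K(0)$, and $y=(0,-t)$. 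Then $\normalcone_K(y)=\R_+\times\set{0}$ stays at a fixed positive distance from $(1,1)$ as $t\downarrow 0$. Second, tuning $\kappa$ has no bearing on membership in a normal cone.

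The missing idea, which is exactly how the paper argues, is to build $\bar\lambda$ into the Ekeland functional. If \eqref{eq:RZKCQ} fails, then for each $\rho\in(0,1)$ some $y_\rho\in B^Y_\rho(0)$ lies outside $\cl\parens*{g'(\bar x)B_1^X(0)-(K-\set{\bar y})\cap B_1^Y(0)}$. Separating it by $y_\rho\dualspace$ with $\norm{y_\rho\dualspace}_{Y\dualspace}=\rho^{1/2}$ gives $\norm{g'(\bar x)\adjoint y_\rho\dualspace}_{X\dualspace}\le\rho^{3/2}$ and $-\dual{y_\rho\dualspace}{y-\bar y}_Y\le\rho^{3/2}$ for all $y\in K\cap B_1^Y(\bar y)$. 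Adding $\dual{\bar\lambda}{y-\bar y}_Y\le 0$, which is valid on $K$, shows that $\bar y$ is a $\rho^{3/2}$-minimizer of $\iota_{K\cap B_1^Y(\bar y)}+\dual{y_\rho\dualspace-\bar\lambda}{\cdot-\bar y}_Y$.

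Ekeland's principle with parameter $\rho^{3/4}$ now yields $\bar y_\rho\in K$ with $\norm{\bar y_\rho-\bar y}_Y\le\rho^{3/4}$ and $\lambda_\rho\in\normalcone_K(\bar y_\rho)$ with $\norm{\lambda_\rho-\bar\lambda+y_\rho\dualspace}_{Y\dualspace}\le\rho^{3/4}$. Hence the normal delivered by Ekeland is automatically a perturbed multiplier near $\bar\lambda$, whether or not $\bar y_\rho\in\overline K$. Concretely, $\lambda_\rho\in\Upsilon_{\bar x}\parens*{g'(\bar x)\adjoint(\lambda_\rho-\bar\lambda),\bar y_\rho-\bar y}$ and $\rho^{1/2}-\rho^{3/4}\le\norm{\lambda_\rho-\bar\lambda}_{Y\dualspace}\le\rho^{1/2}+\rho^{3/4}$. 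Both perturbations are $O(\rho^{3/4})$, which contradicts \eqref{eq:isol_calmness_ups} as $\rho\downarrow0$.

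The two scales $\rho^{1/2}$ and $\rho^{3/4}$ keep $\lambda_\rho$ in $B_\delta^{Y\dualspace}(\bar\lambda)$ while letting $\norm{\lambda_\rho-\bar\lambda}_{Y\dualspace}$ dominate the perturbations. With this argument, your Steps 1 and 2 and \cref{thm:gen_open_2} are not needed.
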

\begin{proof}
	To start, let us note that, 
	according to \cite[Proof of Theorem~2.1]{ZoweKurcyusz1979},
	\eqref{eq:RZKCQ} is equivalent to the existence of $\rho>0$
	such that 
	\begin{equation*}
			B^Y_\rho(0) \subset \cl\parens*{ g'(\bar x) B^X_1(0) - (K-\set{g(\bar x)}) \cap B^Y_1(0) }.
		\end{equation*}
	In order to prove the claim, let us argue by contradiction.
	If \eqref{eq:RZKCQ} is violated, 
	then,
	for each $\rho\in(0,1)$, 
	we find $y_\rho\in B^Y_\rho(0)$ such that
	\[
		y_\rho \notin \cl\parens*{ g'(\bar x) B^X_1(0) - (K-\set{g(\bar x)}) \cap B^Y_1(0) }.
	\]
	Noting that the set on the right is convex and closed, 
	$y_\rho$ can be separated from it,
	see, e.g., \cite[Theorem~1.5.9]{Schirotzek2007},
	i.e., we find $y_\rho^\star\in Y\dualspace$ with $\norm{y_\rho^\star}_{Y\dualspace}=\rho^{1/2}$ such that
	\begin{equation}\label{eq:separability_RZKCQ}
		\forall x\in B_1^X(0),\,\forall y\in K\cap B_1^Y(g(\bar x))\colon\quad
		\dual{y_\rho^\star}{y_\rho}_Y
		>
		\dual{g'(\bar x)\adjoint y_\rho^\star}{x}_X - \dual{y_\rho^\star}{y-g(\bar x)}_Y.
	\end{equation}
	Note that this yields $0 < \dual{y_\rho^\star}{y_\rho}_Y \leq \rho^{3/2}$.
	Testing \eqref{eq:separability_RZKCQ} by $y\coloneqq g(\bar x)$, we find
	\begin{equation}\label{eq:some_estimate}
		\norm{g'(\bar x)\adjoint y_\rho^\star}_{X\dualspace}\leq\rho^{3/2}
	\end{equation}
	according to definition of the norm in $X\dualspace$.
	Using $x\coloneqq 0$ in \eqref{eq:separability_RZKCQ}, we find that
	\begin{equation*}
		\forall y\in K\cap B_1^Y(g(\bar x))\colon\quad
		\rho^{3/2}
		\ge
		\dual{y_\rho^\star}{y_\rho}_Y
		>
		- \dual{y_\rho^\star}{y-g(\bar x)}_Y.
	\end{equation*}
	Next, we add the valid inequality
	\begin{equation*}
		\forall y\in K \colon\quad
		0
		\ge
		\dual{\bar\lambda}{y - g(\bar x)}_Y,
	\end{equation*}
	following from $\bar\lambda\in\normalcone_K(g(\bar x))$, to obtain
	\begin{equation*}
		\forall y\in K\cap B_1^Y(g(\bar x))\colon\quad
		\rho^{3/2}
		\ge
		\dual{\bar\lambda - y_\rho^\star}{y-g(\bar x)}_Y.
	\end{equation*}
	Consequently,
	$g(\bar x)$ is a $\rho^{3/2}$-minimal point of the function $\varphi\colon Y\to\R\cup\set{\infty}$
	given by
	\[
		\forall y\in Y\colon\quad
		\varphi(y) \coloneqq \iota_{K\cap B^Y_1(g(\bar x))}(y) + \dual{y_\rho^\star - \bar\lambda}{y-g(\bar x)}_Y.
	\]
	Thus, we may apply Ekeland's variational principle, 
	see, e.g., \cite[Theorem~1.1]{Ekeland1974},
	in order to find $\bar y_\rho\in K\cap B_1^Y(g(\bar x))$ satisfying
	\begin{equation}\label{eq:bound_on_norm}
		\norm{\bar y_\rho - g(\bar x)}_Y
		\leq
		\rho^{3/4}
		<
		1
	\end{equation}
	and such that $\bar y_\rho$ is the uniquely determined minimizer 
	of $\varphi + \rho^{3/4} \norm{\cdot - \bar y_\rho}_Y$.
	Applying some calculus rules for the subdifferential,
	see, e.g., \cite[Remark~4.4.2, Propositions~4.5.1 and~4.6.2]{Schirotzek2007}, we find
	\[
		0 \in \normalcone_K(\bar y_\rho) + \set{ y_\rho^\star - \bar\lambda } + \rho^{3/4} B^{Y\dualspace}_1(0),
	\]
	where we also used \eqref{eq:bound_on_norm}.
	Thus, there exists $\lambda_\rho\in\normalcone_K(\bar y_\rho)$
	such that 
	\begin{equation}
		\label{eq:norm_of_lambda_and_y_rho_dualspace}
		\norm{\lambda_\rho + y_\rho^\star -\bar\lambda }_{Y\dualspace}\leq \rho^{3/4}
		.
	\end{equation}
	Consequently, due to $\norm{y\dualspace_\rho}_{Y\dualspace}=\rho^{1/2}$, the estimates
	\begin{equation}
		\label{eq:norm_of_some_lambda_difference}
		\rho^{1/2} - \rho^{3/4}
		\le
		\norm{ \lambda_\rho - \bar\lambda }_{Y\dualspace}
		\le
		\rho^{1/2} + \rho^{3/4}
	\end{equation}
	are valid.
	
	We have $\lambda_\rho\in\normalcone_K(g(\bar x) + (\bar y_\rho - g(\bar x)))$.
	Furthermore, we find
	\[
		\LL'_x(\bar x,\lambda_\rho)
		=
		\LL'_x(\bar x,\bar\lambda) + g'(\bar x)\adjoint(\lambda_\rho -\bar\lambda)
		=
		g'(\bar x)\adjoint(\lambda_\rho-\bar\lambda).
	\] 
	Hence, we obtain
	\begin{equation*}
		\lambda_\rho
		\in
		\Upsilon_{\bar x}(
			g'(\bar x)\adjoint(\lambda_\rho - \bar\lambda),
			\bar y_\rho - g(\bar x)
		).
	\end{equation*}
	Thanks to \eqref{eq:bound_on_norm} and \eqref{eq:norm_of_some_lambda_difference},
	we can invoke \eqref{eq:isol_calmness_ups} for $\rho$ small enough
	and obtain
	\begin{align*}
		\rho^{1/2} - \rho^{3/4}
		&
		\le
		\norm{\lambda_\rho - \bar\lambda}_{Y\dualspace}
		\\&
		\le
		c \parens*{
			\norm{
				g'(\bar x)\adjoint(\lambda_\rho-\bar\lambda)
			}_{X\dualspace}
			+
			\norm{\bar y_\rho - g(\bar x)}_Y
		}
		\\&
		\le
		c \parens*{
			\norm{
				g'(\bar x)\adjoint(\lambda_\rho+ y_\rho\dualspace -\bar\lambda)
			}_{X\dualspace}
			+
			\norm{
				g'(\bar x)\adjoint y_\rho\dualspace
			}_{X\dualspace}
			+
			\rho^{3/4}
		}
		\\&
		\le
		c
		\parens*{
			\norm{g'(\bar x)}_{L(X,Y)} \rho^{3/4}
			+
			\rho^{3/2}
			+
			\rho^{3/4}
		}
		,
	\end{align*}
	where we used \eqref{eq:some_estimate}, \eqref{eq:norm_of_lambda_and_y_rho_dualspace},
	and \eqref{eq:norm_of_some_lambda_difference}.
	Division by $\rho^{1/2}$ and taking the limit $\rho\downarrow 0$ yield a contradiction.
\end{proof}

Our next result shows that \eqref{eq:isol_calmness_ups}
is always sufficient for \eqref{eq:weak_sRZKC_i}
under some additional assumption.
Recall from \cref{sec:QCs} that this observation
is independent from \cref{thm:IC_gives_RZKCQ}.

\begin{theorem}
	\label{thm:yet_another_theorem}
	Let $(\bar x,\bar\lambda)\in X\times Y\dualspace$ be a KKT pair of \eqref{eq:problem}
	and assume that \eqref{eq:isol_calmness_ups} is valid.
	Furthermore, assume that one of the following conditions holds.
	\begin{enumerate}
		\item\label{item:Yref}
			The space $Y$ is reflexive.
		\item\label{item:radial_cone_weak_star_closed}
			The cone $\radialcone_{\normalcone_K(g(\bar x))}(\bar\lambda)$ 
			is weakly-$\star$ closed.
		\item\label{item:range_finite_dimensional}
			The range of $g'(\bar x)$ is finite dimensional. 
	\end{enumerate}
	Then \eqref{eq:weak_sRZKC_i} holds.
\end{theorem}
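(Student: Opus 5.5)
The plan is to reduce \eqref{eq:weak_sRZKC_i} to a dual norm estimate via \cref{thm:gen_open_2}. Set $A\coloneqq g'(\bar x)$ and $C\coloneqq\tangentcone_K(g(\bar x))\cap\bar\lambda\anni$. Then $C$ is a closed convex cone, and \eqref{eq:weak_sRZKC_i} is exactly statement~\ref{thm:gen_open_2:1} of \cref{thm:gen_open_2}. Hence it suffices to verify statement~\ref{thm:gen_open_2:4}: there is $\rho>0$ with $\norm{\mu}_{Y\dualspace}\le\rho^{-1}\norm{A\adjoint\mu}_{X\dualspace}$ for all $\mu\in C\polar$. By \cref{lem:fun_with_cones}\,\ref{item:fun_with_cones_general}, applied with $y=g(\bar x)$, we have $C\polar=\wsclosure\parens*{\radialcone_{\normalcone_K(g(\bar x))}(\bar\lambda)}$. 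So the proof splits into two steps: an estimate on the radial cone itself, and its extension to the weak-$\star$ closure.

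\emph{Step 1 (estimate on the radial cone from \eqref{eq:isol_calmness_ups}).} Let $\mu\in\radialcone_{\normalcone_K(g(\bar x))}(\bar\lambda)=\normalcone_K(g(\bar x))+\R_-\bar\lambda$, and write $\mu=\lambda-\alpha\bar\lambda$ with $\lambda\in\normalcone_K(g(\bar x))$ and $\alpha\ge0$. For $t>0$ with $t\alpha<1$ we have
\[
	\bar\lambda+t\mu=(1-t\alpha)\bar\lambda+t\lambda\in\normalcone_K(g(\bar x)),
\]
since the normal cone is a convex cone. Moreover $\LL'_x(\bar x,\bar\lambda+t\mu)=tA\adjoint\mu$, so $\bar\lambda+t\mu\in\Upsilon_{\bar x}(tA\adjoint\mu,0)$. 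For $t$ small enough, both $\norm{tA\adjoint\mu}_{X\dualspace}<\varepsilon$ and $\norm{t\mu}_{Y\dualspace}\le\delta$ hold. Then \eqref{eq:isol_calmness_ups} yields $t\norm{\mu}_{Y\dualspace}\le ct\norm{A\adjoint\mu}_{X\dualspace}$, i.e., $\norm{\mu}_{Y\dualspace}\le c\norm{A\adjoint\mu}_{X\dualspace}$ on the radial cone.

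\emph{Step 2 (passing to the weak-$\star$ closure).} I expect this to be the main obstacle, and it is handled separately under each assumption.

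Under \ref{item:radial_cone_weak_star_closed} there is nothing to do.

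Under \ref{item:Yref}, the weak-$\star$ topology on $Y\dualspace$ coincides with the weak topology. The weak closure of the convex radial cone equals its norm closure, and the estimate passes to norm limits because both sides are norm continuous.

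Under \ref{item:range_finite_dimensional}, take a net $\mu_i\to\mu$ weakly-$\star$ with $\mu_i$ in the radial cone. Let $F\coloneqq AX$, which is finite dimensional, and write $A=\iota_F A_0$ with $A_0\in L(X,F)$ and $\iota_F$ the inclusion. Then $A\adjoint\mu_i=A_0\adjoint(\mu_i|_F)$. Since $\mu_i|_F\to\mu|_F$ weakly-$\star$ in the finite-dimensional space $F\dualspace$, this convergence holds in norm, so $A\adjoint\mu_i\to A\adjoint\mu$ in norm. By weak-$\star$ lower semicontinuity of the dual norm,
\[
	\norm{\mu}_{Y\dualspace}\le\liminf_i\norm{\mu_i}_{Y\dualspace}\le c\lim_i\norm{A\adjoint\mu_i}_{X\dualspace}=c\norm{A\adjoint\mu}_{X\dualspace}.
\]
Here no boundedness of the net is needed a priori.

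In all three cases the estimate of statement~\ref{thm:gen_open_2:4} holds with $\rho=1/c$ on $C\polar$. \Cref{thm:gen_open_2} then gives $Y=AX-C$, which is \eqref{eq:weak_sRZKC_i}.
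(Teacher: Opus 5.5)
Your proposal is correct and follows essentially the same route as the paper. You derive $\norm{\mu}_{Y\dualspace}\le c\norm{g'(\bar x)\adjoint\mu}_{X\dualspace}$ on $\radialcone_{\normalcone_K(g(\bar x))}(\bar\lambda)$ from \eqref{eq:isol_calmness_ups} with $\upsilon=0$ by scaling, extend it to the weak-$\star$ closure (which \cref{lem:fun_with_cones} identifies with $C\polar$) under each of the three assumptions, and conclude with \cref{thm:gen_open_2}; the only differences are cosmetic, namely Mazur's lemma in place of the tangent-cone form of \cref{lem:fun_with_cones} in the reflexive case, and a factorization through the finite-dimensional range in place of an explicit finite-rank representation of $g'(\bar x)\adjoint$ in case~(iii).
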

\begin{proof}
	We use $\upsilon = 0$ in \eqref{eq:isol_calmness_ups}.
	This yields
	\begin{equation}
		\label{eq:isol_calmness_ups0}
		\forall \xi \in B_\varepsilon^{X\dualspace}(0),\,
		\forall \lambda\in\Upsilon_{\bar x}(\xi,0)
		\cap 
		B_\delta^{Y\dualspace}(\bar\lambda)
		\colon
		\qquad
		\norm{\lambda-\bar \lambda}_{Y\dualspace}
		\leq
		c \norm{\xi}_{X\dualspace}.
	\end{equation}
	Note that for any $\lambda \in Y\dualspace$, we have
	$\LL'_x(\bar x,\lambda) = g'(\bar x)\adjoint (\lambda - \bar\lambda)$
	from $\bar\lambda\in\Lambda(\bar x)$.
	Consequently,
	for all
	$\lambda \in \normalcone_K(g(\bar x))$
	with
	$\norm{\lambda - \bar\lambda}_{Y\dualspace} \le \delta$
	and
	$\norm{ g'(\bar x)\adjoint (\lambda - \bar\lambda) }_{X\dualspace} \le \varepsilon$,
	we can use
	$\xi = \LL'_x(\bar x, \lambda)$
	in \eqref{eq:isol_calmness_ups0}
	and this yields
	\begin{equation*}
		\norm{\lambda - \bar\lambda}_{Y\dualspace}
		\le
		c
		\norm{ g'(\bar x)\adjoint (\lambda - \bar\lambda) }_{X\dualspace}
		.
	\end{equation*}
	A simple scaling argument implies
	\begin{equation}\label{eq:estimate_on_radial_cone}
		\forall \mu \in \radialcone_{\normalcone_K(g(\bar x))}(\bar\lambda)
		\colon
		\qquad
		\norm{ \mu }_{Y\dualspace}
		\le
		c
		\norm{ g'(\bar x)\adjoint \mu }_{X\dualspace}
		.
	\end{equation}
	
	In the presence of \ref{item:Yref},
	we first note that, by density, \eqref{eq:estimate_on_radial_cone} yields
	\begin{equation*}
		\forall \mu \in \tangentcone_{\normalcone_K(g(\bar x))}(\bar\lambda)
		\colon
		\qquad
		\norm{ \mu }_{Y\dualspace}
		\le
		c
		\norm{ g'(\bar x)\adjoint \mu }_{X\dualspace}
		.
	\end{equation*}
	From \cref{lem:fun_with_cones}\,\ref{item:fun_with_cones_general}, we get
	\begin{equation}\label{eq:estimate_on_polar_critical_cone}
		\forall \mu \in
		\parens*{ \tangentcone_K(\bar y) \cap \bar\lambda\anni }\polar
		\colon
		\qquad
		\norm{ \mu }_{Y\dualspace}
		\le
		c
		\norm{ g'(\bar x)\adjoint \mu }_{X\dualspace}
		.
	\end{equation}
	Using $A := g'(\bar x)$ and $C := \tangentcone_K(\bar y) \cap \bar\lambda\anni$,
	the claim follows from \cref{thm:gen_open_2}.
	
	Whenever \ref{item:radial_cone_weak_star_closed} holds,
	\eqref{eq:estimate_on_radial_cone} is the same as
	\begin{equation}\label{eq:estimate_on_weak_star_closure_of_radial_cone}
		\forall \mu \in \wsclosure\parens*{\radialcone_{\normalcone_K(g(\bar x))}(\bar\lambda)}
		\colon
		\qquad
		\norm{ \mu }_{Y\dualspace}
		\le
		c
		\norm{ g'(\bar x)\adjoint \mu }_{X\dualspace},
	\end{equation}
	and
	\cref{lem:fun_with_cones}\,\ref{item:fun_with_cones_general} directly yields that
	\eqref{eq:estimate_on_polar_critical_cone} is true,
	so that the desired result is, again, valid due to \cref{thm:gen_open_2}.	
	
	Finally, we are going to verify that, in the presence of \ref{item:range_finite_dimensional},
	\eqref{eq:estimate_on_weak_star_closure_of_radial_cone} is true as well,
	which yields the claim as mentioned above.
	Therefore, we note that the additional assumption guarantees that
	$g'(\bar x)\adjoint$ has the representation
	\begin{equation*}
		\forall y\dualspace\in Y\dualspace\colon\quad
		g'(\bar x)\adjoint y\dualspace 
		= 
		\sum_{k = 1}^n \dual{y\dualspace}{y_k}_Y\, x_k\dualspace
	\end{equation*}
	for some $n \in \N$ as well as $y_1,\ldots,y_n \in Y$ and $x_1\dualspace,\ldots,x_n\dualspace \in X\dualspace$,
	see, e.g., \cite[Section~28]{Heuser1986}.
	Consequently,
	$g'(\bar x)\adjoint$ sends weakly-$\star$ convergent nets to strongly convergent
	ones.
	Hence, recalling that each
	$\mu\in\wsclosure\bigl(\radialcone_{\normalcone_K(g(\bar x))}(\bar\lambda)\bigr)$
	can be represented as the limit of a weakly-$\star$ convergent net
	from $\radialcone_{\normalcone_K(g(\bar x))}(\bar\lambda)$
	while $\norm{ \cdot}_{Y\dualspace}$ is weakly-$\star$ lower semicontinuous,
	\eqref{eq:estimate_on_weak_star_closure_of_radial_cone} follows
	from \eqref{eq:estimate_on_radial_cone}.
\end{proof}

Let us note that condition \ref{item:range_finite_dimensional} from \cref{thm:yet_another_theorem}
is trivially satisfied if $X$ is finite dimensional.
Furthermore, observe the above proof only utilizes \eqref{eq:isol_calmness_ups}
with $\upsilon = 0$.
Finally, we would like to mention that \cref{ex:multiplier_unique_but_nothing_else}
and \cref{thm:yet_another_theorem} illustrate that \eqref{eq:isol_calmness_ups}
is not necessary for the uniqueness of the multiplier.

The next example shows
that \eqref{eq:weak_sRZKC_i}
is not sufficient for \eqref{eq:isol_calmness_ups}.
It is inspired by \cite[Example~4.22]{Wachsmuth2019}.
\begin{example}
	\label{ex:this_condition_is_not_enough}
	Let us set
	\begin{equation*}
		X = \R^2
		,\quad
		Y = C([-1,1])
		,\quad
		K =
		\set{
			\varphi \in Y
			\given
			\forall s \in [-1,1] \colon
			0 \le \varphi(s) \le 1 + s^2
		}
		.
	\end{equation*}
	We further define the operator $A \in L(X, Y)$ via
	\begin{equation*}
		\forall x \in X ,\, \forall s \in [-1,1] \colon
		\qquad
		(A x)(s) = x_1 + x_2 s
	\end{equation*}
	and we set $g(x) = A x$, which implies $g'(x) = A$ for all $x \in X$.
	A precise definition of $f$ is not required in this example.

	We consider the feasible point $\bar x = (1, 0) \in \R^2$
	and set $\bar y = g(\bar x) = \onefunction$.
	Note that
	\begin{equation*}
		\tangentcone_K(\bar y)
		=
		\set{
			\varphi \in Y
			\given
			\varphi(0) \le 0
		}
		.
	\end{equation*}
	Further,
	$\bar\lambda = \delta_0$
	is the Dirac measure at the point $0$.

	Clearly,
	$\radialcone_K(\bar y)$ contain all nonpositve functions from $Y$.
	Since $g'(\bar x)X$ contain the constant functions, this implies
	that \eqref{eq:RZKCQ} is satisfied.
	Further, it is easy to check that \eqref{eq:weak_sRZKC_i} holds
	as we have
	\begin{equation*}
		\tangentcone_K(\bar y) \cap \bar\lambda\anni
		=
		\set{
			\varphi \in Y
			\given
			\varphi(0) = 0
		}
	\end{equation*}
	and since $g'(\bar x) X$ contains the constant functions.
	However, \eqref{eq:isol_calmness_ups} is violated,
	as it will be shown next.

	Let us check that \eqref{eq:isol_calmness_ups} is not satisfied.
	To this end, let $\varepsilon \in (0,\tfrac12)$
	be given and choose $\upsilon \in Y$ such that
	$\bar y + \upsilon \in K$ and
	\begin{equation*}
		\forall s \in [-\varepsilon,\varepsilon]\colon
		\qquad
		(\bar y + \upsilon)(s) = 1 + s^2
		.
	\end{equation*}
	Note that this is possible with $\norm{\upsilon}_Y \le \varepsilon^2$.
	Next, we denote by $\lambda$ the signed regular measure with
	\begin{equation*}
		\forall B \in \borel([-1,1]) \colon
		\quad
		\lambda(B)
		=
		\int_{B \cap [-\varepsilon, \varepsilon]} 1 \d s
		+
		(1 - 2\varepsilon) \delta_0(B).
	\end{equation*}
	By definition of the normal cone, it is clear that $\lambda \in \normalcone_K(\bar y + \upsilon)$.
	Consequently,
	$\lambda \in \Upsilon_{\bar x}( g'(\bar x)\adjoint (\lambda - \bar\lambda) , \upsilon)$.
	Finally, we note that
	\begin{align*}
		\norm{\lambda - \bar\lambda}_{Y\dualspace}
		&=
		4 \varepsilon
		,
		\\
		\norm{g'(\bar x)\adjoint(\lambda - \bar\lambda)}_{X\dualspace}
		&=
		\norm*{
			\parens*{
				\int_{[-1, 1]} 1 \d(\lambda - \bar\lambda)
				,
				\int_{[-1, 1]} s \d(\lambda - \bar\lambda)
			}
		}
		\\&
		=
		\norm*{
			\parens*{
				0,
				\int_{-\varepsilon}^{\varepsilon} s \d s
			}
		}
		=
		0
		.
	\end{align*}
	This shows that \eqref{eq:isol_calmness_ups}
	cannot be satisfied with any constants
	$\delta, \varepsilon, c > 0$.

	This example also has another interesting property.
	For any $y \in \overline K$ in a small neighborhood of $\bar y$, we have
	\begin{equation*}
		\normalcone_K(y)
		=
		\mathcal{M}(\set{s \in [-1,1] \given y(s) = 1 + s^2})_+
		,
	\end{equation*}
	i.e., the normal cone to $K$ at $y$ corresponds to
	the nonnegative measures on the set where $y$ coincides with the upper bound.
	Consequently,
	\begin{equation*}
		\normalcone_K(y)
		+
		\R_- \bar\lambda
		=
		\radialcone_{\normalcone_K(y)}(\bar\lambda)
	\end{equation*}
	is always weak-$\star$ closed.
\end{example}

Let us now come back to the initial question of the section.
To start, we state the following result which is an immediate consequence of
\cite[Proposition~4.47]{BonnansShapiro2000} and \cite{BonnansShapiro2000err}.

\begin{theorem}\label{thm:BonnansShapiro}
	Let $(\bar x,\bar\lambda)\in X\times Y\dualspace$ be a KKT pair of \eqref{eq:problem}
	such that $\Lambda(\bar x)=\set{\bar\lambda}$.
	We further assume that the cones
	\[
		\radialcone_K(g(\bar x)),
		\qquad
		g'(\bar x) X - \radialcone_K(g(\bar x))\cap\bar\lambda\anni
	\]
	are closed while the cone $\radialcone_{\normalcone_K(g(\bar x))}(\bar\lambda)$ is weakly-$\star$ closed.
	Then \eqref{eq:sRZKC} is valid.
\end{theorem}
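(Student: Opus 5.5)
I will deduce \eqref{eq:sRZKC} from the uniqueness characterization in \cref{thm:uniqueness}, then use the closedness hypotheses to upgrade a dense-range statement to an equality.

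First, since $\Lambda(\bar x)=\set{\bar\lambda}$ and $\radialcone_{\normalcone_K(g(\bar x))}(\bar\lambda)$ is weakly-$\star$ closed, \cref{thm:uniqueness}\,\ref{item:uniqueness_char} gives \eqref{eq:uniqueness_char}. Then \cref{thm:uniqueness}\,\ref{item:uniqueness_weak_sRZKC_nec} yields \eqref{eq:weak_sRZKC_ii}, that is,
\[
	Y=\cl\parens*{g'(\bar x)X-\tangentcone_K(g(\bar x))\cap\bar\lambda\anni}.
\]

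Second, I replace the tangent cone by the radial cone. Because $\radialcone_K(g(\bar x))$ is closed, it coincides with $\tangentcone_K(g(\bar x))$. Hence
\[
	\tangentcone_K(g(\bar x))\cap\bar\lambda\anni=\radialcone_K(g(\bar x))\cap\bar\lambda\anni,
\]
so that
\[
	Y=\cl\parens*{g'(\bar x)X-\radialcone_K(g(\bar x))\cap\bar\lambda\anni}.
\]

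Third, the cone $g'(\bar x)X-\radialcone_K(g(\bar x))\cap\bar\lambda\anni$ is closed by assumption. The closure can therefore be dropped, and the display above is exactly \eqref{eq:sRZKC}.

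As a cross-check that avoids \cref{thm:uniqueness}\,\ref{item:uniqueness_weak_sRZKC_nec}, one can polarize directly. The polar of $D:=g'(\bar x)X-\radialcone_K(g(\bar x))\cap\bar\lambda\anni$ equals
\[
	\ker g'(\bar x)\adjoint\cap\parens*{-\wsclosure\radialcone_{\normalcone_K(g(\bar x))}(\bar\lambda)}.
\]
This uses \cref{lem:fun_with_cones}\,\ref{item:fun_with_cones_general}, which applies since $\cl(\radialcone_K\cap\bar\lambda\anni)=\tangentcone_K\cap\bar\lambda\anni$ when $\radialcone_K$ is closed. Weak-$\star$ closedness and \eqref{eq:uniqueness_char} then force $D\polar=\set{0}$. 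Since $D$ is a closed convex cone, the bipolar theorem gives $D=Y$.

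The only step needing care is the identification $\tangentcone_K\cap\bar\lambda\anni=\radialcone_K\cap\bar\lambda\anni$. It relies on $\tangentcone_K(g(\bar x))$ being defined as the closure of $\radialcone_K(g(\bar x))$, which is the paper's definition, so the identification holds immediately. This is precisely the point where the closedness of $\radialcone_K(g(\bar x))$ enters. The remaining steps are direct citations of earlier results.
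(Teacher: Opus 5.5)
Your proof is correct. It also differs from the paper, which gives no argument of its own and only records the result as a consequence of \cite[Proposition~4.47]{BonnansShapiro2000} together with the erratum \cite{BonnansShapiro2000err}. Your route is self-contained and uses only results proved in the paper:
\begin{enumerate}
	\item You get \eqref{eq:uniqueness_char} from \cref{thm:uniqueness}\,\ref{item:uniqueness_char}. Weak-$\star$ closedness of $\radialcone_{\normalcone_K(g(\bar x))}(\bar\lambda)$ is not needed here, only in the next step.
	\item You upgrade this to \eqref{eq:weak_sRZKC_ii} via \cref{thm:uniqueness}\,\ref{item:uniqueness_weak_sRZKC_nec}.
	\item You use the two remaining closedness hypotheses to replace the tangent cone by the radial cone and to drop the closure.
\end{enumerate}
Your polar cross-check is sound as well. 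It uses the fact that $\ker g'(\bar x)\adjoint$ is a subspace, so \eqref{eq:uniqueness_char} also kills $\ker g'(\bar x)\adjoint\cap(-\radialcone_{\normalcone_K(g(\bar x))}(\bar\lambda))$.

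What your derivation buys over the citation is that it shows exactly where each assumption enters. In particular, closedness of $\radialcone_K(g(\bar x))$ is used only to secure $\cl\parens*{\radialcone_K(g(\bar x))\cap\bar\lambda\anni}=\tangentcone_K(g(\bar x))\cap\bar\lambda\anni$. Your argument therefore goes through verbatim if that hypothesis is weakened to polyhedricity of $K$ w.r.t.\ $(g(\bar x),\bar\lambda)$. Closedness of the radial cone implies this polyhedricity, and polyhedricity still gives
\[
	g'(\bar x)X-\tangentcone_K(g(\bar x))\cap\bar\lambda\anni\subset\cl\parens*{g'(\bar x)X-\radialcone_K(g(\bar x))\cap\bar\lambda\anni}.
\]
The citation, for its part, is shorter and ties the statement to its original source.
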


Let us recall that the uniqueness assumption on the multiplier in \cref{thm:BonnansShapiro}
is trivially satisfied in the presence of \eqref{eq:isol_calmness_ups}.

We note that the closedness assumptions in \cref{thm:BonnansShapiro} 
are difficult to check and rather restrictive if $K$ is nonpolyhedric or $Y$ is infinite dimensional.
Next, we will actually prove that \eqref{eq:isol_calmness_ups}
is sufficient for \eqref{eq:sRZKC}
if $Y$ is finite dimensional and $K$ is polyhedric.

\begin{theorem}\label{thm:IC_gives_sRZKCQ_findim}
	Let $(\bar x,\bar\lambda)\in X\times Y\dualspace$ be a KKT pair of \eqref{eq:problem}
	and assume that \eqref{eq:isol_calmness_ups} is valid.
	Furthermore, assume that $Y$ is finite dimensional and that $K$ is polyhedric.
	Then \eqref{eq:sRZKC} holds.
\end{theorem}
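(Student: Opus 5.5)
The plan is to chain results already in the paper; no new construction is needed. Since $Y$ is finite dimensional, it is reflexive. Hence assumption \ref{item:Yref} of \cref{thm:yet_another_theorem} holds, and together with \eqref{eq:isol_calmness_ups} that theorem yields \eqref{eq:weak_sRZKC_i}, that is,
\[
	Y = g'(\bar x)X - \tangentcone_K(g(\bar x))\cap\bar\lambda\anni .
\]
In particular the weaker condition \eqref{eq:weak_sRZKC_ii} holds as well.

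It remains to pass from \eqref{eq:weak_sRZKC_ii} to \eqref{eq:sRZKC}, using polyhedricity and finite dimensionality as sketched in \cref{sec:QCs}.

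\emph{Polyhedricity step.} By \cref{lem:fun_with_cones}\,\ref{item:fun_with_cones_polyhedric}, $\bar\lambda\in\normalcone_K(g(\bar x))$, so polyhedricity of $K$ applies at $(g(\bar x),\bar\lambda)$. It gives
\[
	\cl\parens*{\radialcone_K(g(\bar x))\cap\bar\lambda\anni}=\tangentcone_K(g(\bar x))\cap\bar\lambda\anni .
\]
Write $A:=g'(\bar x)X$ and $B:=\radialcone_K(g(\bar x))\cap\bar\lambda\anni$. Then $A-\cl B\subset\cl(A-B)$, so $\cl(A-B)\supset A-\cl B = Y$. Hence
\[
	Y=\cl\parens*{g'(\bar x)X-\radialcone_K(g(\bar x))\cap\bar\lambda\anni}.
\]

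\emph{Finite-dimensional step.} The set $g'(\bar x)X-\radialcone_K(g(\bar x))\cap\bar\lambda\anni$ is a convex cone, and by the previous step it is dense in the finite-dimensional space $Y$. A convex set in finite dimensions has the same relative interior as its closure, so this cone has nonempty interior containing $0$; being a cone, it therefore equals $Y$. This is exactly \eqref{eq:sRZKC}. Alternatively, one may quote \cite[Theorem~2.17]{BonnansShapiro2000}, as in \cref{sec:QCs}.

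I do not expect a genuine obstacle, because the analytic work is already done in \cref{thm:yet_another_theorem} via \cref{thm:gen_open_2}. The two points to state carefully are the following. First, polyhedricity is only needed at the single pair $(g(\bar x),\bar\lambda)$. Second, the closure inclusion $A-\cl B\subset\cl(A-B)$ is what transfers surjectivity from the tangent-cone version to the radial-cone version. The convex-geometry fact that a dense convex cone in $\R^m$ is all of $\R^m$ is the only place finite dimensionality of $Y$ enters beyond reflexivity.
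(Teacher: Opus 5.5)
Your proof is correct, but it takes a different route from the paper's.

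\emph{Your route.} You factor the theorem through \eqref{eq:weak_sRZKC_i}. Since a finite-dimensional $Y$ is reflexive, \cref{thm:yet_another_theorem} yields \eqref{eq:weak_sRZKC_i}; that theorem in turn rests on \cref{lem:fun_with_cones}~\ref{item:fun_with_cones_general} and the Ekeland-based \cref{thm:gen_open_2}. You then upgrade to \eqref{eq:sRZKC} using three ingredients, exactly as sketched in \cref{sec:QCs}: polyhedricity at the single pair $(g(\bar x),\bar\lambda)$, the inclusion $A-\cl B\subset\cl(A-B)$, and the fact that a dense convex cone in a finite-dimensional space is the whole space.

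\emph{The paper's route.} The paper argues directly by contradiction. If $Q:=g'(\bar x)X-\radialcone_K(g(\bar x))\cap\bar\lambda\anni\neq Y$, finite dimensionality yields a small supporting functional $y\dualspace\neq0$ of $Q$ at $0$, which lies in $\ker g'(\bar x)\adjoint\cap\normalcone_{\overline K}(g(\bar x))$. \cref{lem:fun_with_cones}~\ref{item:fun_with_cones_polyhedric} then approximates $y\dualspace$ by $\lambda_k-\alpha_k\bar\lambda$ with $\lambda_k\in\normalcone_K(g(\bar x))$ and $\alpha_k\ge1$. Applying \eqref{eq:isol_calmness_ups} to $\lambda_k/\alpha_k$ forces $y\dualspace=0$.

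\emph{What each buys.} The paper's proof is self-contained and elementary: it needs only a finite-dimensional supporting hyperplane, not the open-mapping machinery of \cref{thm:gen_open_2}. Your decomposition shows that the theorem is essentially a corollary of \cref{thm:yet_another_theorem} and the remarks in \cref{sec:QCs}, i.e., of the dashed arrows (a) and (c) in \cref{fig:summary}. It also isolates where each hypothesis enters:
\begin{itemize}
\item \eqref{eq:isol_calmness_ups} together with reflexivity gives \eqref{eq:weak_sRZKC_i};
\item polyhedricity then makes $g'(\bar x)X-\radialcone_K(g(\bar x))\cap\bar\lambda\anni$ dense in any reflexive $Y$;
\item finite dimensionality is used only to pass from density to equality---precisely the step that fails in \cref{ex:yet_another_counterexample}.
\end{itemize}
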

\begin{proof}
	The set
	$Q := g'(\bar x) X - \radialcone_K(g(\bar x))\cap\bar\lambda\anni \subset Y$
	appearing in \eqref{eq:sRZKC} is a convex cone.
	Suppose that $Q \ne Y$.
	Consequently, $0 \in Y$ is a boundary point of $Q$.
	Since $Y$ is finite dimensional,
	\cite[Lemma~4.2.1]{HiriartUrrutyLemarechal2012}
	yields a functional $y\dualspace \in Y\dualspace \setminus \set{0}$
	with $\norm{y\dualspace}_{Y\dualspace} \le \delta/2$
	such that
	\begin{equation*}
		\forall x \in X,\, \forall y \in \overline{ K } \colon
		\qquad
		0
		\le
		\dual{g'(\bar x)\adjoint y\dualspace}{x}_X
		-
		\dual{y\dualspace}{y - g(\bar x)}_Y
		,
	\end{equation*}
	see \eqref{eq:radialcone_to_bar_K} as well.
	Consequently, $g'(\bar x)\adjoint y\dualspace = 0$
	and
	$y\dualspace \in \normalcone_{\overline K}(g(\bar x))$.
	We can apply \cref{lem:fun_with_cones}~\ref{item:fun_with_cones_polyhedric} and
	for each $k\in\N$,
	we find $\lambda_k \in \normalcone_K(g(\bar x))$,
	$\alpha_k \ge 0$, and $\tilde y\dualspace_k \in Y\dualspace$
	such that $\norm{\tilde y\dualspace_k}_{Y\dualspace}\leq \delta/(2k)$
	and
	\[
		y\dualspace = \lambda_k - \alpha_k \bar\lambda + \tilde y\dualspace_k.
	\]
	Since $\bar\lambda \in \normalcone_K(g(\bar x))$, we can assume w.l.o.g.\ that $\alpha_k \ge 1$
	holds for each $k\in\N$.
	Consequently,
	$\lambda_k / \alpha_k \in \normalcone_K(g(\bar x))$ and
	$\norm{ \lambda_k/\alpha_k - \bar\lambda }_{Y\dualspace} = \norm{y\dualspace - \tilde y\dualspace_k}_{Y\dualspace} / \alpha_k \le \delta$.
	Now,
	\eqref{eq:isol_calmness_ups} and $g'(\bar x)\adjoint y\dualspace=0$
	imply
	\begin{equation*}
		\frac1{\alpha_k}
		\norm{ y\dualspace - \tilde y\dualspace_k }_{Y\dualspace}
		=
		\norm{ \lambda_k/\alpha_k - \bar\lambda }_{Y\dualspace}
		\le
		c
		\norm{g'(\bar x)\dualspace ( \lambda_k / \alpha_k - \bar\lambda ) }_{X\dualspace}
		=
		\frac{c}{\alpha_k}
		\norm{ g'(\bar x) \tilde y\dualspace_k }_{X\dualspace}
	\end{equation*}
	and, thus, $\norm{y\dualspace}_{Y\dualspace}-\norm{\tilde y\dualspace_k}_{Y\dualspace}\leq c\norm{g'(\bar x)}_{L(X,Y)}\norm{\tilde y\dualspace_k}_{Y\dualspace}$
	for each large enough $k\in\N$.
	Taking the limit $k\to\infty$ yields a contradiction as $y\dualspace\neq 0$.
\end{proof}

Note that the above proof only utilizes \eqref{eq:isol_calmness_ups}
with $\upsilon = 0$.

The subsequent example indicates that \eqref{eq:isol_calmness_ups},
in general, does not imply \eqref{eq:sRZKC} if $K$ is nonpolyhedric,
even if $Y$ is finite dimensional.
Particularly, the polyhedricity assumption in \cref{thm:IC_gives_sRZKCQ_findim}
cannot be dropped.

\begin{example}\label{ex:no_chance_without_polyhedricity}
	Let us investigate \eqref{eq:problem},
	where
	\[
		\begin{aligned}
		X&=\R,&	\quad Y&=\R^2&\quad &&
		\\
		f(x)&= -x,&\quad g(x)&=(x^2,x+1),&\quad
		K&=\set{y\in\R^2 \given \norm{y}\leq 1}.&
		\end{aligned}
	\]
	As the feasible set of this problem equals $[-1,0]$,
	its uniquely determined minimizer is $\bar x=0$.
	Throughout, we use $\bar y=g(\bar x)=(0,1)$.
	
	As we have 
	\[	
		\radialcone_K(\bar y)=\set{d\in\R^2 \given d_2<0}\cup\set{(0,0)},
		\qquad
		\normalcone_K(\bar y)=\set{0}\times\R_+,
	\]
	one can easily check that the KKT conditions
	yield $\Lambda(\bar x)=\set{(0,1)}$.
	However, \eqref{eq:sRZKC} with $\bar\lambda = (0,1)$ fails
	since $g'(\bar x)X=\set{0}\times\R$ and $\radialcone_K(\bar y)\cap\bar\lambda\anni=\set{(0,0)}$.
	
	Let us consider a pair $(\xi,\upsilon) \in B_{1/2}^{\R\times\R^2}((0,0))$
	and some $\lambda\in\Upsilon_{\bar x}(\xi,\upsilon)$.
	The definition of $\Upsilon_{\bar x}$ yields
	\begin{equation}\label{eq:ex_nonpolyhedric_pertubed_KKT_system}
		-1 + \lambda_2 = \xi,
		\qquad
		\lambda\in\normalcone_K(\bar y +\upsilon).
	\end{equation}
	If $\norm{\bar y + \upsilon}<1$, then $\normalcone_K(\bar y + \upsilon)=\set{(0,0)}$
	and the above requires $\xi=-1$ which is not possible
	in the considered ball.
	If $\norm{\bar y+\upsilon}>1$, then $\normalcone_K(\bar y+\upsilon)=\emptyset$.
	Hence, it remains to consider the case $\norm{\bar y + \upsilon}=1$.
	Then $\normalcone_K(\bar y+\upsilon)=\cone(\set{\bar y+\upsilon})$,
	and \eqref{eq:ex_nonpolyhedric_pertubed_KKT_system} reduces to 
	\[
		-1 + \lambda_2 = \xi,
		\qquad
		\lambda_1 = \alpha\,\upsilon_1,
		\qquad
		\lambda_2 = \alpha\,(1+\upsilon_2),
		\qquad
		\alpha\geq 0.
	\]	
	In the considered ball, this yields $\alpha = (1+\xi)/(1+\upsilon_2)$, 
	and
	\[
		\lambda = (1+\xi)\left(\frac{\upsilon_1}{1+\upsilon_2},1\right).
	\]
	Hence, on the considered ball, $\Upsilon_{\bar x}$ is a single-valued
	locally Lipschitz continuous function,
	and the latter is clearly sufficient for \eqref{eq:isol_calmness_ups}.
\end{example}

Our final example in this section shows that,
even in the setting of reflexive Banach spaces $X$ and $Y$ 
and a polyhedric set $K$,
it may happen that \eqref{eq:isol_calmness_ups} is valid
while \eqref{eq:sRZKC} is violated.
Note that this example is a minor modification of \cref{ex:weak_sRZKC_vs_RZKCQ}.

\begin{example}
	\label{ex:yet_another_counterexample}
	Let us consider the setting
	\begin{align*}
		X &= L^2(0,1),
		&
		Y &= L^2(0,1) \times L^2(0,1),
		&
		K &= L^2(0,1)_+ \times L^2(0,1)_+,
		\\
		\bar y & = g(\bar x) = (\zerofunction, \onefunction),
		&
		f'(\bar x) &= \onefunction,
		&
		g'(\bar x) x &= (x,x)
		.
	\end{align*}
	The precise value of $\bar x$ is not important.
	From \cref{ex:weak_sRZKC_vs_RZKCQ}
	we get the formulas \eqref{eq:some_cones_in_L2},
	and we still have $\bar\lambda=(-\onefunction,\zerofunction)\in\Lambda(\bar x)$.
	Hence, we still may rely on the formulas from \eqref{eq:some_annihilated_cones_in_L2}.
	
	One the one hand,
	for an arbitrary pair $(y_1, y_2) \in Y$, we have
	\begin{align*}
		y_1
		&=
		\max(y_1, y_2) - \max(\zerofunction, y_2 - y_1)
		,
		\\
		y_2
		&=
		\max(y_1, y_2) - \max(\zerofunction, y_1 - y_2)
		,
	\end{align*}
	where $\max$ has to be understood in pointwise fashion.
	Noting that $\max(y_1,y_2)\in L^2(0,1)$ and 
	$\max(\zerofunction,y_2-y_1),\max(\zerofunction,y_1-y_2)\in L^2(0,1)_+$,
	this shows
	\begin{equation*}
		Y
		= 
		g'(\bar x) X - \radialcone_K(\bar y)
		,
	\end{equation*}
	i.e., \eqref{eq:RZKCQ} is valid.
	Furthermore, validity of
	\begin{equation*}
		Y = g'(\bar x) X - \tangentcone_K(\bar y) \cap \bar\lambda\anni
	\end{equation*}
	is obvious.
	On the other hand, we can choose
	$y_2 \in L^2(0,1) \setminus \parens*{ L^2(0,1)_+ + L^\infty(0,1) }$.
	Then it is easy to check that
	$(\zerofunction, y_2)$ does not belong to $g'(\bar x) X - \radialcone_K(\bar y) \cap \bar\lambda\anni$.
	Consequently,
	\begin{equation*}
		Y
		\ne
		g'(\bar x) X - \radialcone_K(\bar y) \cap \bar\lambda\anni
		,
	\end{equation*}
	i.e., \eqref{eq:sRZKC} is violated. 

	Finally,
	we check that \eqref{eq:isol_calmness_ups} holds.
	To this end, let
	$\upsilon \in Y$ and
	$\lambda \in \normalcone_K(\bar y + \upsilon)$
	be arbitrary.
	Since $\bar y_1 = \zerofunction$, we have $\upsilon_1 \ge \zerofunction$.
	W.l.o.g., we can further assume that $\upsilon_1 = \zerofunction$,
	since the normal cone would become smaller for $\upsilon_1$ being positive 
	on a subset of $(0,1)$ of positive measure.
	Similarly, $\upsilon_2 \ge -1$.
	Next, we define the set
	$I = \set{ t \in (0,1) \given \upsilon_2(t) = -1}$.
	Note that
	$\lambda_2$ vanishes outside of $I$, i.e.,
	we may identify 
	$\lambda_2 \in L^2(I)_-$.
	Moreover,
	$\norm{\upsilon_2}_{L^2(0,1)} \ge \abs{I}^{1/2}$
	holds due to $\upsilon_2\equiv -1$ on $I$.

	Thus, validity of \eqref{eq:isol_calmness_ups}
	follows if we are in a position to show
	\begin{align*}
		\norm{\lambda - \bar\lambda}_{Y\dualspace}
		&=
		\parens*{
			\norm{\lambda_1 + \onefunction}_{L^2(0,1)}^2
			+
			\norm{\lambda_2}_{L^2(I)}^2
		}^{1/2}
		\\&
		\le
		c
		\parens*{
			\norm{ g'(\bar x)\adjoint (\lambda_1 + \onefunction, \lambda_2)}_{L^2(0,1)}
			+
			\abs{I}^{1/2}
		}
		\\&
		=
		c
		\parens*{
			\norm{ \lambda_1 + \onefunction + \lambda_2}_{L^2(0,1)}
			+
			\abs{I}^{1/2}
		}
	\end{align*}
	for $\lambda_1 \in L^2(0,1)_-$ and $\lambda_2 \in L^2(I)_-$.
	Note that this inequality is implied by the inequality
	\begin{equation*}
		\norm{\lambda_1 + \onefunction}_{L^2(0,1)}^2
		+
		\norm{\lambda_2}_{L^2(I)}^2
		\le
		\tilde c
		\parens*{
			\norm{ \lambda_1 + \onefunction + \lambda_2}_{L^2(0,1)}^2
			+
			\abs{I}
		}
	\end{equation*}
	for $\lambda_1 \in L^2(0,1)_-$ and $\lambda_2 \in L^2(I)_-$
	where $\tilde c:=c^2$.
	Since all the terms can be written as integrals,
	it remains to check 
	\begin{align*}
		(1-\alpha)^2 + 0
		&\le \tilde c \parens*{ (1-\alpha-0)^2 + 0 }
		,
		\\
		(1-\alpha)^2 + \beta^2
		&\le \tilde c \parens*{ (1-\alpha-\beta)^2 + 1 }
	\end{align*}
	for all $\alpha,\beta\in\R_+$,
	where the first inequality
	corresponds to points $t \in (0,1) \setminus I$
	and the second one to $t \in I$.
	The first inequality holds trivially for all $\tilde c \ge 1$
	and the second inequality holds, by \cref{lem:some_inequality}, for $\tilde c=3$.
	Consequently, \eqref{eq:isol_calmness_ups}
	is satisfied with $\delta = \varepsilon = \infty$
	and $c = \sqrt 3$.

	To summarize,
	in this example,
	\eqref{eq:RZKCQ}, \eqref{eq:isol_calmness_ups}, \eqref{eq:weak_sRZKC_i}, and \eqref{eq:weak_sRZKC_ii}
	hold, see \cref{thm:yet_another_theorem} as well, but \eqref{eq:sRZKC} is violated.
	Moreover, it has most of the nice properties
	that one could hope for,
	e.g., reflexivity of $X$ and $Y$ and polyhedricity of $K$, see \cite[Theorem~3.58]{BonnansShapiro2000}.
	However, $Y$ is infinite dimensional.

	It is also instructive to modify the example slightly
	by using $\bar y_2$ defined via $\bar y_2(s) = s$
	and keeping all other data unchanged.
	Note that after this modification of $\bar y_2$,
	we still have $\bar\lambda\in\Lambda(\bar x)$,
	\begin{equation*}
		Y
		= g'(\bar x) X - \radialcone_K(\bar y)
		= g'(\bar x) X - \tangentcone_K(\bar y) \cap \bar\lambda\anni,
	\end{equation*}
	and
	\begin{equation*}
		Y
		\ne
		g'(\bar x) X - \radialcone_K(\bar y) \cap \bar\lambda\anni
	\end{equation*}
	by the very same arguments.
	However, \eqref{eq:isol_calmness_ups} is now violated.
	Indeed, for $\varepsilon > 0$,
	we consider
	\begin{equation*}
		\upsilon = (\zerofunction, -\chi_{(0,\varepsilon)} \bar y_2),
		\qquad
		\lambda = (-\onefunction + \chi_{(0,\varepsilon)}, -\chi_{(0,\varepsilon)}).
	\end{equation*}
	It is clear that $\lambda \in \normalcone_K(\bar y + \upsilon)$.
	However, a direct calculation shows
	\begin{align*}
		\norm{\lambda - \bar\lambda}_{Y}
		&=
		\parens*{
			\norm{\chi_{(0,\varepsilon)}}_{L^2(0,1)}^2
			+
			\norm{\chi_{(0,\varepsilon)}}_{L^2(0,1)}^2
		}^{1/2}
		=
		\sqrt{2 \varepsilon},
		\\
		\norm{g'(\bar x)\adjoint ( \lambda - \bar\lambda)}_{X\dualspace} + \norm{\upsilon}_Y
		&=
		\norm{\lambda_1 + \onefunction + \lambda_2}_{L^2(0,1)} + \norm{\upsilon_2}_{L^2(0,1)}
		=
		\frac{\varepsilon^{3/2}}{\sqrt{3}}
		.
	\end{align*}
	Consequently, \eqref{eq:isol_calmness_ups} cannot hold
	for any constants $c, \varepsilon, \delta > 0$.

	The same effect can be observed when choosing the initial $\bar y = (\zerofunction,\onefunction)$
	but taking $\bar\lambda_1(s) = -s^{-1/4}$ and $\bar\lambda_2=\zerofunction$
	while adjusting $f'(\bar x)=-\bar\lambda_1$.
	Then one can consider the perturbation
	\begin{equation*}
		\upsilon = (\zerofunction, -\chi_{(0,\varepsilon)} \bar y_2),
		\qquad
		\lambda = (\chi_{(\varepsilon,1)} \bar\lambda_1, \chi_{(0,\varepsilon)}\bar\lambda_1).
	\end{equation*}
	Then, some calculations show
	\begin{align*}
		\norm{\lambda - \bar\lambda}_{Y}
		&=
		\parens*{2 \int_0^\varepsilon s^{-1/2} \d s }^{1/2}
		=
		2 \varepsilon^{1/4}
		,
		\\
		\norm{g'(\bar x)\adjoint ( \lambda - \bar\lambda)}_{X\dualspace} + \norm{\upsilon}_Y
		&=
		\norm{\lambda_1-\bar\lambda_1+\lambda_2}_{L^2(0,1)}
		+
		\norm{\upsilon_2}_{L^2(0,1)}
		=
		\varepsilon^{1/2}
		.
	\end{align*}
	Again, \eqref{eq:isol_calmness_ups} cannot hold for any constants $c, \varepsilon, \delta > 0$.
\end{example}

Taking our considerations from \cref{sec:QCs} and \cref{thm:IC_gives_sRZKCQ_findim}
together indicates validity of the following result.

\begin{corollary}\label{cor:all_the_same}
	Let $(\bar x,\bar\lambda)\in X\times Y\dualspace$ be a KKT pair of \eqref{eq:problem}.
	Furthermore, assume that $Y$ is finite dimensional and that $K$ is polyhedric.
	Then conditions \eqref{eq:sRZKC}, \eqref{eq:isol_calmness_ups}, \eqref{eq:weak_sRZKC_i},
	and \eqref{eq:weak_sRZKC_ii} are all equivalent.
\end{corollary}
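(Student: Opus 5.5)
We will close a cycle of implications, using mostly results already established.
Under finite dimensionality of $Y$ and polyhedricity of $K$, \cref{sec:QCs} shows that \eqref{eq:sRZKC}, \eqref{eq:weak_sRZKC_i}, and \eqref{eq:weak_sRZKC_ii} are mutually equivalent. The trivial chain \eqref{eq:sRZKC} $\Longrightarrow$ \eqref{eq:weak_sRZKC_i} $\Longrightarrow$ \eqref{eq:weak_sRZKC_ii} holds in general. For the converse, polyhedricity gives $\cl(\radialcone_K(g(\bar x))\cap\bar\lambda\anni)=\tangentcone_K(g(\bar x))\cap\bar\lambda\anni$. This turns \eqref{eq:weak_sRZKC_ii} into $Y=\cl(g'(\bar x)X-\radialcone_K(g(\bar x))\cap\bar\lambda\anni)$. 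In finite dimensions a convex cone with dense span, i.e. closure equal to $Y$, is all of $Y$ (\cite[Theorem~2.17]{BonnansShapiro2000}), so we recover \eqref{eq:sRZKC}. It therefore remains only to link \eqref{eq:isol_calmness_ups} to this block.

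For \eqref{eq:isol_calmness_ups} $\Longrightarrow$ \eqref{eq:sRZKC}, we simply invoke \cref{thm:IC_gives_sRZKCQ_findim}, whose hypotheses are exactly those of the corollary. As an alternative route, \cref{thm:yet_another_theorem} also applies, since a finite-dimensional $Y$ is reflexive; it yields \eqref{eq:weak_sRZKC_i}, which is equivalent by the previous paragraph.

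For the reverse implication \eqref{eq:sRZKC} $\Longrightarrow$ \eqref{eq:isol_calmness_ups}, we appeal to \cite[Proposition~4.47]{BonnansShapiro2000}, as recalled at the beginning of \cref{sec:main}. It shows that \eqref{eq:sRZKC} implies \eqref{eq:isol_calmness_ups} in full generality, with no assumption on $Y$ or $K$.

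Combining the three pieces closes the cycle and establishes the equivalence of all four conditions. The only point deserving care is the finite-dimensional step replacing the closure in \eqref{eq:weak_sRZKC_ii} by the set itself. This uses that in $\R^m$ a convex set and its closure have the same interior, and that a cone with nonempty interior containing $0$ in its interior is the whole space. Everything else is a direct citation, so this is the main obstacle, though a mild one.
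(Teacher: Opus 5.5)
Your proposal is correct and is exactly the paper's argument: the equivalence of \eqref{eq:sRZKC}, \eqref{eq:weak_sRZKC_i}, and \eqref{eq:weak_sRZKC_ii} from \cref{sec:QCs} (polyhedricity plus finite dimensionality), then \cref{thm:IC_gives_sRZKCQ_findim} for \eqref{eq:isol_calmness_ups}~$\Longrightarrow$~\eqref{eq:sRZKC}, and Proposition~4.47 of Bonnans--Shapiro for the converse. One small wording fix: what you need is that the convex cone itself is dense in $Y$, not that its span is dense (a half-space has full span but is not all of $Y$), although your ``i.e.'' makes the intended meaning clear.
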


In \cref{fig:summary},
we summarize the essentially well known observations from \cref{sec:QCs}
and our new findings from this section in order to clarify
the relations between \eqref{eq:sRZKC}, \eqref{eq:RZKCQ},
\eqref{eq:isol_calmness_ups}, \eqref{eq:weak_sRZKC_i}, and \eqref{eq:weak_sRZKC_ii}
in visually appealing way.

\begin{figure}[ht]
\centering
\begin{tikzpicture}[->]

  \node[punkt] at (0,0) 	(A){\eqref{eq:sRZKC}};
  \node[punkt] at (11/3,0) 	(B){\eqref{eq:weak_sRZKC_i}};
  \node[punkt] at (11*2/3,0)		(C){\eqref{eq:weak_sRZKC_ii}};
  \node[punkt] at (11/3,-1.5)	(D){\eqref{eq:isol_calmness_ups}};
  \node[punkt] at (11*2/3,-3)	(E){\eqref{eq:RZKCQ}};
  \node[punkt] at (11,0)	(F){$\abs{\Lambda(\bar x)}=1$};
  \node[punkt] at (11,-3)	(G){$\Lambda(\bar x)\neq\emptyset$};

  \path     (A) edge[-implies,thick,double] node {}(B)
  			(A) edge[-implies,thick,double] node {}(D)
  			(B) edge[-implies,thick,double] node {}(C)
  			(C) edge[-implies,thick,double,dashed,bend right] node[above] {$(c)$}(A)
  			(C) edge[-implies,thick,double,dashed] node[right] {$(b)$}(E)
  			(C) edge[-implies,thick,double] node {}(F)
  			(D) edge[-implies,thick,double,dashed,bend angle=25,bend left] node[below] {$(c)$}(A)
  			(D) edge[-implies,thick,double,dashed] node[right] {$(a)$}(B)
  			(D) edge[-implies,thick,double] node {}(E)
  			(E) edge[-implies,thick,double] node {}(G)
  			(F) edge[-implies,thick,double] node {}(G)
  			;
\end{tikzpicture}
\caption{%
	Relations between qualification conditions for some feasible point $\bar x\in X$ of \eqref{eq:problem}.
	Dashed relations hold in the presence of additional assumptions:
	(a) requires condition \ref{item:Yref}, \ref{item:radial_cone_weak_star_closed}, or \ref{item:range_finite_dimensional}
		from \cref{thm:yet_another_theorem},
	(b) requires $Y$ to be finite dimensional,
	(c) requires $Y$ to be finite dimensional and $K$ to be polyhedric.
}%
\label{fig:summary}
\end{figure}

In \cref{tab:ex_overview}, we summarize the features of all examples stated in this paper.
The fact that \eqref{eq:isol_calmness_ups} fails in \cref{ex:weak_sRZKC_vs_RZKCQ}
is a consequence of \cref{thm:IC_gives_RZKCQ}.
Polyhedricity of the sets $K$ used in \cref{ex:weak_sRZKC_vs_RZKCQ,ex:this_condition_is_not_enough,ex:yet_another_counterexample}
follows from \cite[Theorem~3.58]{BonnansShapiro2000}
(for \cref{ex:weak_sRZKC_vs_RZKCQ,ex:yet_another_counterexample})
and \cite[Theorem~4.18]{Wachsmuth2019}
(for \cref{ex:this_condition_is_not_enough}).

\begin{table}[ht]
	\centering
	\begin{tabular}{cccccc}
	\toprule
	Example	&	\eqref{eq:isol_calmness_ups} & \eqref{eq:weak_sRZKC}
		& \eqref{eq:RZKCQ}	&	$\dim(Y)<\infty$	&	$K$ polyhedric
	\\
	\midrule
	\cref{ex:weak_sRZKC_vs_RZKCQ}
		& \xmark	& \cmark	& \xmark	& \xmark	& \cmark
	\\
	\cref{ex:multiplier_unique_but_nothing_else}
		& \xmark	& \xmark	& \cmark	& \cmark	& \xmark
	\\
	\cref{ex:this_condition_is_not_enough}
		& \xmark	& \cmark	& \cmark	& \xmark	& \cmark
	\\
	\cref{ex:no_chance_without_polyhedricity}
		& \cmark	& \cmark	& \cmark	& \cmark	& \xmark
	\\
	\cref{ex:yet_another_counterexample}
		& \cmark	& \cmark	& \cmark	& \xmark	& \cmark
	\\
	\bottomrule
	\end{tabular}
	\caption{%
		Features of the examples in this paper.
		In all of them, \eqref{eq:sRZKC} is violated
		and the multiplier under consideration is uniquely determined.
		Furthermore, in all examples, both conditions
		\eqref{eq:weak_sRZKC_i} and \eqref{eq:weak_sRZKC_ii} in \eqref{eq:weak_sRZKC}
		either hold in parallel or are violated at the same time,
		so that we use just one column for their documentation.
	}%
	\label{tab:ex_overview}
\end{table}

Let us close this section by a remark which is concerned with a
seemingly stronger version of condition \eqref{eq:isol_calmness_ups}.

\begin{remark}\label{rem:IC_delta_infty}
	Let $(\bar x,\bar\lambda)\in X\times Y\dualspace$ be a KKT pair of \eqref{eq:problem}.
	According to \cite{BonnansShapiro2000}, validity of \eqref{eq:sRZKC} already implies
	the existence of $\varepsilon,c>0$ such that
	\begin{equation}\label{eq:strong_IC}
	\begin{aligned}
	\forall (\xi,\upsilon)\in B_\varepsilon^{X\dualspace\times Y}((0,0)),\,
	&\forall \lambda\in\Upsilon_{\bar x}(\xi,\upsilon)
	\colon\quad
	\norm{\lambda-\bar \lambda}_{Y\dualspace}
	\leq
	c(\norm{\xi}_{X\dualspace}+\norm{\upsilon}_{Y})
	\end{aligned}
	\end{equation}
	is valid, i.e., \eqref{eq:isol_calmness_ups} holds with $\delta:=\infty$.
	Clearly, \eqref{eq:strong_IC} is sufficient for \eqref{eq:isol_calmness_ups},
	and, following \cref{cor:all_the_same}, the converse is true if $Y$ is finite dimensional and $K$ is polyhedric.
	However, it is not clear whether \eqref{eq:strong_IC} and \eqref{eq:isol_calmness_ups} are
	equivalent in the absence of additional assumptions,
	and addressing this open problem might be a promising subject of future research.
	
	We can give an answer for the situation where the constraints in 
	\eqref{eq:pertubed_problem} are not perturbed, i.e., if $\upsilon=0$ is fixed.
	Here, \eqref{eq:strong_IC} and \eqref{eq:isol_calmness_ups} are, indeed, equivalent.
	In order to verify this, let us assume that \eqref{eq:isol_calmness_ups} holds for $\varepsilon,\delta,c>0$,
	and pick $\lambda\in\Upsilon_{\bar x}(\xi,0)$ where $\xi\in B_\varepsilon^{X\dualspace}(0)$ is valid.
	For each $\alpha\in[0,1]$, $\hat\lambda_\alpha:=\alpha\lambda+(1-\alpha)\bar\lambda$
	belongs to $\normalcone_K(g(\bar x))$ as the latter set is convex,
	and, for small enough $\alpha>0$, we even have $\hat\lambda_\alpha\in B_\delta^{Y\dualspace}(\bar\lambda)$.
	Furthermore, 
	\[
		\LL'_x(\bar x,\hat\lambda_\alpha)
		=
		\alpha\,g'(\bar x)\adjoint(\lambda-\bar\lambda)
		=
		\alpha\,\LL'_x(\bar x,\lambda)
		=
		\alpha\xi
	\]
	is valid,
	so that \eqref{eq:isol_calmness_ups} implies
	\[
		\norm{\hat\lambda_\alpha-\bar \lambda}_{Y\dualspace}\leq c\norm{\alpha\xi}_{X\dualspace}.
	\]
	Due to $\hat\lambda_\alpha-\bar\lambda=\alpha(\lambda-\bar\lambda)$,
	this also shows
	\[
		\norm{\lambda-\bar\lambda}_{Y\dualspace}\leq c\norm{\xi}_{X\dualspace},
	\]
	i.e., \eqref{eq:strong_IC} restricted to $\upsilon = 0$ follows.
\end{remark}

\section{Conclusions}\label{sec:conclusions}

In this paper, we have shown that condition \eqref{eq:isol_calmness_ups},
the isolated calmness of the restricted Lagrange multiplier mapping $\Upsilon_{\bar x}$ at $((0,0),\bar\lambda)$, 
is always sufficient for the validity of \eqref{eq:RZKCQ} and,
whenever $Y$ is finite dimensional while $K$ is polyhedric, for the validity of \eqref{eq:sRZKC},
see \cref{thm:IC_gives_RZKCQ,thm:IC_gives_sRZKCQ_findim}.
Some illustrative examples have been presented to underline that these additional assumptions, indeed,
cannot be omitted in general.
As mentioned in \cref{rem:IC_delta_infty}, it would be an interesting question of future research
to clarify whether \eqref{eq:isol_calmness_ups} already implies the stronger condition \eqref{eq:strong_IC}.
Furthermore, one may study isolated calmness type properties of the more general Lagrange multiplier mapping
$\Upsilon\colon X\dualspace\times Y\rightrightarrows X\times Y\dualspace$ given by
\[
	\forall \xi\in X\dualspace,\,\forall \upsilon\in Y\colon\quad
	\Upsilon(\xi,\upsilon)
	:=
	\set{(x,\lambda)\in X\times Y\dualspace \given \LL'_x(x,\lambda)=\xi,\,\lambda\in\normalcone_K(g(x)+\upsilon)}
\]
which, naturally, should be related to second-order conditions associated with \eqref{eq:problem},
see, e.g., \cite[Theorem~4.51]{BonnansShapiro2000}.

\appendix
\section{A helpful inequality}

The following lemma provides an estimate which is used in \cref{ex:yet_another_counterexample}.

\begin{lemma}\label{lem:some_inequality}
	We have
	\[
		\forall \alpha,\beta\in\R_+\colon\quad
		(1-\alpha)^2+\beta^2
		\leq 
		3\parens*{
			(1-\alpha-\beta)^2+1
		}.
	\]
\end{lemma}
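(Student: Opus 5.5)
We prove \cref{lem:some_inequality} by a substitution that separates the variables. Set $u:=1-\alpha\in(-\infty,1]$, so the claim reads
\[
	u^2+\beta^2\leq 3\parens*{(u-\beta)^2+1}
	\qquad\text{for all } u\leq 1,\ \beta\geq 0.
\]
Expanding the right-hand side gives $3u^2-6u\beta+3\beta^2+3$. Moving everything to one side, it suffices to show that
\[
	h(u,\beta):=2u^2-6u\beta+2\beta^2+3\geq 0
\]
on this region.

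\emph{Case $u\leq 0$.} Then $-6u\beta\geq 0$, since $\beta\geq 0$. All remaining terms of $h$ are nonnegative, so $h>0$ holds trivially.

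\emph{Case $0<u\leq 1$.} We view $h$ as a quadratic in $\beta$ and minimise over $\beta\geq 0$. The unconstrained minimiser is $\beta=\tfrac32u\geq0$, so it is admissible. The minimum value is
\[
	2u^2-\tfrac{9}{2}u^2+3=3-\tfrac52u^2.
\]
This is at least $3-\tfrac52=\tfrac12>0$ for $u\leq1$.

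Both cases give $h\geq 0$, which proves the claim. The only delicate point is the second case: one must make sure that the restriction $u\leq 1$, equivalently $\alpha\geq 0$, is what keeps the minimum value $3-\tfrac52u^2$ positive, and that the constant $3$ is large enough for this. For instance, the constant $2$ would give $2u^2-4u\beta+\beta^2+2$, whose minimum over $\beta$ is $2-2u^2\geq 0$, so it also works. Since the statement asks for $3$, the computation above suffices.
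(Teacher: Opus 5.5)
Your proof is correct and is essentially the paper's argument: the paper expands to the same quadratic in $\beta$, disposes of $\alpha\ge 1$ by a sign argument, and evaluates at the apex $\beta=\frac32(1-\alpha)$, getting the minimum $\frac32-\frac54(1-\alpha)^2\ge\frac14$, which is exactly half of your $3-\frac52u^2\ge\frac12$. Your closing aside about the constant $2$ is false and should be dropped, though it plays no role in the proof: the difference is $u^2-4u\beta+\beta^2+2$, not $2u^2-4u\beta+\beta^2+2$, and its minimum $2-3u^2$ is negative at $u=1$ (e.g.\ $\alpha=0$, $\beta=2$ gives $5>4$).
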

\begin{proof}
	Given $\alpha,\beta\in\R_+$, 
	the desired inequality is, obviously, equivalent to
	\[
		2(1-\alpha)\beta\leq 2(1-\alpha-\beta)^2+3
	\]
	and, thus, to
	\begin{equation}\label{eq:some_equivalent_inequality}
		3(1-\alpha)\beta\leq (1-\alpha)^2 + \beta^2 + \frac32.
	\end{equation}
	We note that the latter is trivial if $\alpha\geq 1$
	as the left-hand side is nonpositive in this case.
	
	Hence, let us fix some arbitrary $\alpha\in[0,1)$ and consider the convex parabola 
	\[
		\beta\mapsto \beta^2-3(1-\alpha)\beta+(1-\alpha)^2+\frac32.
	\]
	The latter possesses the apex $(\frac32(1-\alpha),\frac32-\frac54(1-\alpha)^2)$,
	and as $\frac32-\frac54(1-\alpha)^2\geq\frac14>0$,
	\eqref{eq:some_equivalent_inequality} holds for $\alpha\in[0,1)$ as well.
\end{proof}

%%fakesection: Bibliography
%\bibliographystyle{siamplain}
%\bibliography{references}

\end{document}